\def\RR{\mathbb R}
\newcommand{\set}[1]{\left\{#1\right\}}%set
\newcommand{\qtq}[1]{\quad\text{#1}\quad}%\quad\text{and}\quad
\newcommand{\remove}[1]{ }
\newtheorem{theorem}{Theorem}[section]
\newtheorem{proposition}[theorem]{Proposition}
\newtheorem{lemma}[theorem]{Lemma}
\newtheorem*{theorem*}{Theorem}
\theoremstyle{definition}
\theoremstyle{remark}
\newtheorem*{remark}{Remark}
\newtheorem*{remarks}{Remarks}
\newtheorem*{example}{Example}
\numberwithin{equation}{section}
\numberwithin{figure}{section}
\begin{document}
\title{Critical bases for ternary alphabets}
\thanks{Version of 2016-04-05-b}
\author{Vilmos Komornik}
\address{16 rue de Copenhague\\
         67000 Strasbourg, France}
\email{vilmos.komornik@gmail.com}
\remove{\address{Département de mathématique\\
         Université de Strasbourg\\
         7 rue René Descartes\\
         67084 Strasbourg Cedex, France}
\email{komornik@math.unistra.fr}
}
\author{Marco Pedicini}
\address{Department of Mathematics and Physics\\
Roma Tre University\\
Lar\-go San Leonardo Murialdo 1\\
00146 Roma, Italy}      
\email{marco.pedicini@uniroma3.it}

\subjclass[2000]{Primary: 11A63, Secondary: 11B83}
\keywords{Ternary alphabet, unique expansion, non-integer base, beta-expansion, univoque sequence, Komornik--Loreti constant}
%\date{}
\thanks{Part of this work has been done during the visit of the first author at the Department of Basic and Applied Sciences for Engineering of the Sapienza University of Rome in 2014 and 2015, and at the Department of Mathematics of the Tor Vergata University of Rome in 2016.
The author thanks these institutions for their hospitality.}

%\dedicatory

\begin{abstract}
Glendinning and Sidorov discovered an important feature of the Komornik--Loreti constant $q'\approx1.78723$ in non-integer base expansions on two-letter alphabets: in bases $1<q<q'$ only countably numbers have unique expansions, while for $q\ge q'$ there is a continuum of such numbers. 
We investigate the analogous question for ternary alphabets.
\end{abstract}

\maketitle

\remove{Authors: 

\begin{itemize}
\item Vilmos Komornik, Département de mathématique,
         Université de Strasbourg,
         7 rue René Descartes,
         67084 Strasbourg Cedex, France,
         e-mail: \texttt{komornik@math.unistra.fr}
\item Marco Pedicini, Department of Mathematics and Physics,
Roma Tre University,
Lar\-go San Leonardo Murialdo 1,
00146 Roma, Italy, 
e-mail: \texttt{marco.pedicini@uniroma3.it}
\end{itemize}
}
\section{Introduction}\label{s1}

Given a real \emph{base} $q>1$ and a finite \emph{alphabet} $A\subset\RR$ (having at least two elements), by an \emph{expansion} of a real number $x$ we mean a sequence $c=(c_i)\subset A$ satisfying the equality
\begin{equation*}
\pi_q(c)=\pi_q(c_i):=\sum_{i=1}^{\infty}\frac{c_i}{q^i}=x.
\end{equation*}
We denote by $A^{\infty}$ the set of all sequences $c=(c_i)\subset A$, by $U_{A,q}$ the set of numbers $x$ having a unique expansion, and by $U'_{A,q}\subset A^{\infty}$ the set of the corresponding expansions.

The structure of the \emph{univoque set} $U_{A,q}$ is well known for the regular alphabets $A=\set{0,1,\ldots,m}$, $m=1,2,\ldots ;$ see, e.g., the reviews \cite{DevKom2016}, \cite{Kom2011} and their references.
For the general case a number of basic results have been given in \cite{Ped2005}.

Writing $a:=\min A$ and $b:=\max A$, it is clear that the lexicographically smallest and greatest sequences $a^{\infty}$ and $b^{\infty}$  belong to $U'_{A,q}$ for all $q>1$. 

Here and in the sequel we emply the notation of symbolic dynamics.
For example, we denote by $a^{\infty}$ the constant sequence $a,a,\ldots,$ by $(ab)^{\infty}$ the periodic sequence $a,b,a,b,\ldots,$ by $\set{ab,abb}^{\infty}$ the set of sequences formed by the blocks $A_1,A_2,\ldots,$ where each block is equal to one of the words $ab$ or $abb$, and by $d^*\set{ab,abb}^{\infty}$ the  union of the sequences of the form $c$, $dc$, $ddc$\ldots with some sequence $c\in\set{ab,abb}^{\infty}$.

For the regular alphabets $A=\set{0,1,\ldots,m}$ the univoque sets $U'_{A,q}$ are increasing with $q$.
In the general case the following holds:

\begin{proposition}\label{p11}
Fix an alphabet $A$.
\begin{enumerate}[\upshape (i)]
\item If $q>1$ is sufficiently close to one, then $U'_{A,q}=\set{a^{\infty},b^{\infty}}$.
\item If 
\begin{equation*}
q>1+\frac{a_J-a_1}{\min_{j>1} \set{a_j-a_{j-1}}},
\end{equation*}
then $U'_{A,q}=A^{\infty}$.
\item If 
\begin{equation}\label{11}
1<q\le R_A:=1+\frac{a_J-a_1}{\max_{j>1} \set{a_j-a_{j-1}}},
\end{equation}
and $p>q$, then $U'_{A,q}\subset U'_{A,p}$.
\end{enumerate}
\end{proposition}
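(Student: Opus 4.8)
The plan is to derive all three parts from one \emph{greedy--lazy characterization} of unique expansions. Fix $q>1$ and write $A=\set{a_1<\cdots<a_J}$, so $a=a_1$, $b=a_J$. Suppose $c=(c_i)\in A^\infty$ has a second expansion $c'$, and let $n$ be the first index with $c_n\ne c'_n$, say $c'_n>c_n=a_j$; then $c'_n-c_n\ge a_{j+1}-a_j$ and comparing values gives $\frac{a_{j+1}-a_j}{q^n}\le\frac{c'_n-c_n}{q^n}=\sum_{i>n}\frac{c_i-c'_i}{q^i}\le\sum_{i>n}\frac{c_i-a}{q^i}$, i.e. $c$ is \emph{increasable at $n$}: $\sum_{i>n}\frac{c_i-a}{q^{i-n}}\ge a_{j+1}-a_j$ (symmetrically, if $c'_n<c_n$, then $c$ is \emph{decreasable at $n$}: $\sum_{i>n}\frac{b-c_i}{q^{i-n}}\ge a_j-a_{j-1}$). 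Hence, if at every index the relevant sum is strictly below its gap, then $c\in U'_{A,q}$, and this sufficiency holds for \emph{every} $q$. The converse requires realising $\pi_q(c)$ by a sequence starting $c_1,\dots,c_{n-1},a_{j+1}$, which is possible exactly when the required tail value lies in $q^{-n}\pi_q(A^\infty)$; and $\pi_q(A^\infty)$ equals the \emph{full} interval $[\tfrac{a}{q-1},\tfrac{b}{q-1}]$ precisely when consecutive first-digit cylinders overlap, i.e. $\tfrac{b-a}{q-1}\ge a_{j+1}-a_j$ for all $j$, which means $q\le R_A$. Thus for $q\le R_A$ one gets the clean equivalence: $c\in U'_{A,q}$ iff $c$ is neither increasable nor decreasable at any index. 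Establishing this equivalence, and in particular pinning $R_A$ as the exact surjectivity threshold by induction from the cylinder-overlap condition, is the main technical step on which everything rests.

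For part (ii) no realisability is needed. If $q>1+\frac{a_J-a_1}{\min_{j>1}\set{a_j-a_{j-1}}}$ and $c$ had a second expansion, the first-difference computation would force $\min_{j>1}\set{a_j-a_{j-1}}\le c'_n-c_n=\sum_{i>n}\frac{c_i-c'_i}{q^i}\,q^n\le\frac{b-a}{q-1}<\min_{j>1}\set{a_j-a_{j-1}}$, a contradiction; hence $U'_{A,q}=A^\infty$.

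For part (iii) I would use monotonicity in the base. Fix $c\in U'_{A,q}$ with $q\le R_A$. Since $q\le R_A$ the equivalence applies, so every increasability/decreasability sum for $c$ is strictly below its gap at $q$. Each sum $\sum_{i>n}\frac{c_i-a}{q^{i-n}}$ and $\sum_{i>n}\frac{b-c_i}{q^{i-n}}$ has nonnegative terms, hence is nonincreasing in the base; so for $p>q$ all of them remain strictly below the same gaps. By the always-valid sufficiency direction (which does \emph{not} require $p\le R_A$), $c\in U'_{A,p}$, proving $U'_{A,q}\subset U'_{A,p}$.

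For part (i) I would exhibit a uniform threshold $q_0>1$ below which every $c\ne a^\infty,b^\infty$ is increasable or decreasable somewhere, working in $1<q<\min(q_0,R_A)$ so realisability is available. Let $n$ be the first index with $c_n>a$ and $m$ the first with $c_m<b$; both exist. If $n\ge2$ then $c_{n-1}=a$, and assuming $c$ is not decreasable at $n$ turns $\sum_{i>n}\frac{b-c_i}{q^{i-n}}<a_j-a_{j-1}$ (with $c_n=a_j$) into $\sum_{i\ge n}\frac{c_i-a}{q^{i-n}}>\frac{b-a}{q-1}$, whence $\sum_{i\ge n}\frac{c_i-a}{q^{i-n+1}}>\frac{b-a}{q(q-1)}>a_2-a_1$ as soon as $q(q-1)<\frac{b-a}{a_2-a_1}$, so $c$ is increasable at $n-1$. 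The case $n=1,\ m\ge2$ is the mirror image under the reflection $x\mapsto a+b-x$, which preserves gaps and uniqueness while swapping increasable with decreasable. In the remaining case $n=m=1$, $c_1=a_j$ is a strict interior letter; were it neither increasable nor decreasable, adding the two strict inequalities gives $\frac{b-a}{q-1}<(a_{j+1}-a_j)+(a_j-a_{j-1})=a_{j+1}-a_{j-1}$, impossible once $q-1<\frac{b-a}{\max_{2\le j\le J-1}(a_{j+1}-a_{j-1})}$. Taking $q_0$ to be the least of these finitely many thresholds together with $R_A$ gives $q_0>1$ and $U'_{A,q}=\set{a^\infty,b^\infty}$ for $1<q<q_0$.
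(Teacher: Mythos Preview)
Your proof is correct and rests on the same key tool as the paper: the greedy--lazy characterization of $U'_{A,q}$ (the paper's Proposition~\ref{p21}), whose sufficiency direction holds for all $q>1$ and whose necessity direction holds for $q\le R_A$. Your treatment of (ii) and (iii) is essentially identical to the paper's, except that you derive the characterization inline (including the surjectivity of $\pi_q$ for $q\le R_A$) rather than quoting it, and you note explicitly that for (iii) only the sufficiency direction is needed at the larger base $p$, so no hypothesis $p\le R_A$ is required.

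The genuine difference is in part (i). The paper simply cites \cite[Corollary 2.7]{KomLaiPed2010}, whereas you give a direct elementary argument: for $c\ne a^\infty,b^\infty$ you locate the first non-minimal and first non-maximal digits and, by a short case analysis (using the complementary identity $\sum_{i>n}\tfrac{c_i-a}{q^{i-n}}+\sum_{i>n}\tfrac{b-c_i}{q^{i-n}}=\tfrac{b-a}{q-1}$ and the reflection $x\mapsto a+b-x$), exhibit an index at which $c$ is increasable or decreasable once $q$ lies below an explicit finite list of thresholds. This buys a self-contained, quantitative version of (i) with a concrete $q_0$, at the cost of a slightly longer argument; the paper's route is shorter but relies on an external reference.
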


It follows from the proposition that there exist two critical bases $p_A$ and $r_A$ such that $1<p_A\le r_A$, and 
\begin{align*}
q\in (1,p_A)&\Longrightarrow U'_{A,q}\quad\text{is finite;}\\
q\in (p_A,r_A)&\Longrightarrow U'_{A,q}\quad\text{is countably infinite;}\\
q\in (r_A,\infty)&\Longrightarrow U'_{A,q}\quad\text{is uncountable.}
\end{align*}
It was observed without proof by Erd\H os \cite{Erd1996} that either $U'_{A,q}$ is countable, or it has the power of continuum: this property holds without assuming the continuum hypothesis.
A proof was given by Baker \cite{Baker2015}; see also \cite[Theorem 2.3.1, p. 22]{DevKom2016}.

\begin{example}\label{e12}
For $A=\set{0,1}$ we have $p_A=\varphi\approx 1.61803$ (the \emph{Golden Ratio}) and $r_A=q'\approx 1.78723$ (the \emph{Komornik--Loreti constant}, see \cite{KomLor1998}).
The first result was proved by Dar\'oczy et al. \cite{DarJarKat1986}, \cite{DarKat1993} (see also \cite{ErdJooKom1990} and \cite{SidVer1998}), while the second was established by Glendinning and Sidorov \cite{GleSid2001}. 
More precisely, 
\begin{align*}
q\in (1,\varphi]&\Longrightarrow U'_{A,q}=\set{0^{\infty},1^{\infty}};\\
q\in (\varphi,q')&\Longrightarrow U'_{A,q}\quad\text{is countably infinite;}\\
q\in [q',\infty)&\Longrightarrow U'_{A,q}\quad\text{has the power of continuum;}\\
q\in (2,\infty)&\Longleftrightarrow U'_{A,q}=\set{0,1}^{\infty}.
\end{align*}
For example, $\set{0^*(10)^{\infty}, 1^*(01)^{\infty}}\subset U'_{A,q}$ for all $q>\varphi$.

See also de Vries \cite{Dev2009} for stronger set-theoretical results.

Since the critical bases are invariant for non-constant affine transformations of the alphabet, these results remain valid for all two-letter alphabets.
\end{example}

\begin{example}\label{e13}
For more general regular alphabets $A_m=\set{0,1,\ldots, m}$, $m=2,3,\ldots,$ the critical bases  $p_{A_m}$ have been determined by Baker \cite{Baker2014}: they are integer if $m$ is even, and quadratic irrational numbers if $m$ is odd.

The bases $r_{A_m}$ have been determined in \cite[p. 425]{DevKom2009} for $m=2$ and by Kong, Li and Dekking \cite{KongLiDekking2010} for $m\ge 3$. Generalizing the just mentioned theorem of Glendinning and Sidorov, they have proved that the bases $r_{A_m}$ coincide with the generalized Komornik--Loreti constants introduced in   \cite{KomLor2002}, and hence they are transcendental numbers.
For example, we have $r_{A_2}\approx 2.53595.$
\end{example}

As an example we recall some results  concerning the \emph{generalized golden ratios} $p_A$ for all three-letter alphabets.
By an affine transformation it suffices to consider the alphabets $\set{0,1,m}$ with $m\in [2,\infty)$.
Properties (i)-(viii) below have been obtained in \cite{KomLaiPed2010} (see Theorem 1.1, the proof of Lemma 5.3 and Remark 5.12); (ix) is due to Lai \cite{Lai2011}.

\begin{theorem}[KLP]\label{t12}
We consider the alphabets $\set{0,1,m}$ with $m\in [2,\infty)$, and we write $p_m$ instead of $p_{\set{0,1,m}}$ for brevity.

\begin{enumerate}[\upshape (i)]
\item The function $m\mapsto p_m$ is continuous.
\item We have $2\le p_m\le P_m:=1+\sqrt{m/(m-1)}$ for all $m$.
\item $p_m=2\Longleftrightarrow m\in\set{2,4,8,\ldots}$.
\item $C:=\set{m\ :\ p_m=1+\sqrt{m/(m-1)}}$ is a Cantor set. 
\item In each connected component\footnote{The indices $d$ run over a set of sequences closely related to the Sturmian sequences in symbolic dynamics.} $(m_d,M_d)$ of $(2,\infty)\setminus C$ there exists a point $\mu_d$ such that $p_m$ is decreasing in $[m_d,\mu_d]$ and increasing in $[\mu_d,M_d]$.
Moreover, $p_m$ is given by two explicit formulas in $(m_d,\mu_d]$ and $[\mu_d,M_d)$, respectively.
\item If $q\in (1,p_m)$, then $U'_{A,q}=\set{0^{\infty},m^{\infty}}$.
Furthermore, $U'_{A,p_m}=\set{0^{\infty},m^{\infty}}$ if and only if $m\in [m_d,M_d]$ where $(m_d,M_d)$ is a connected component of $(2,\infty)\setminus C$.
\item If $q>p_m$, then $\set{m^*\delta'}\subset U'_{A,q}$ with some sequence $\delta'\ne m^{\infty}$ depending on $m$, to be described later.
\item If $q\in (1,P_m)$, then each element of $U'_{m,q}\setminus\set{0^{\infty}}$ has the form $0^*c$ with some $c\in\set{1,m}^{\infty}$.
\item If $q\in (1,P_m)$, then $U'_{m,q}$ is countable.
\end{enumerate}
\end{theorem}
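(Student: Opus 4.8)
The plan is to peel off the leading zeros, reduce to a countability statement over the two-letter alphabet $\{1,m\}$, and then exploit that $P_m$ lies strictly below the true (Komornik--Loreti type) critical base.

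\emph{Reduction.} By (viii), every element of $U'_{m,q}\setminus\{0^{\infty}\}$ has the form $0^{k}c$ with $k\ge 0$ and $c\in\{1,m\}^{\infty}$. A tail of a unique expansion is again a unique expansion — if $\pi_q(c)=\pi_q(\sigma^{k}(0^{k}c))$ had a second expansion $c'$, then $0^{k}c'$ would be a second expansion of $\pi_q(0^{k}c)$ — so $0^{k}c\in U'_{m,q}$ forces $c\in U'_{m,q}$. Hence, writing $W:=U'_{m,q}\cap\{1,m\}^{\infty}$, we get $U'_{m,q}\subseteq\{0^{\infty}\}\cup\bigcup_{k\ge 0}0^{k}W$, a countable union of injective copies of $W$; so it suffices to prove $W$ countable.

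\emph{A usable criterion.} Since $q<P_m\le R_A$ (from $\sqrt{t}\le t$ with $t=m/(m-1)\ge 1$), the no-gap property holds: every point of $[0,\tfrac{m}{q-1}]$ has a $\{0,1,m\}$-expansion in base $q$. Consequently the elementary ``single alphabet-step switch'' test is both necessary and sufficient: with $t_{n+1}:=\pi_q(\sigma^{n}c)$, a sequence $(c_i)$ is its own unique expansion iff for every $n$ one can neither raise $c_n$ by one alphabet step (impossible when $c_n\ne m$) nor lower it by one alphabet step (impossible when $c_n\ne 0$). For $c\in\{1,m\}^{\infty}$ this says: at each position with $c_n=1$,
$$\frac{m}{q-1}-1 < t_{n+1} < m-1 ,$$
while at a position with $c_n=m$ the only condition, $t_{n+1}>\tfrac{m}{q-1}-(m-1)$, is inactive once $q>2$ (and $q=2$ occurs only for $m\in\{2,4,8,\dots\}$, where countability is immediate from (vi)). Equivalently there are sequences $\delta'\in\{1,m\}^{\infty}$ — the one occurring in (vii) — and $\gamma\in\{1,m\}^{\infty}$, $\gamma$ being the quasi-greedy expansion of $m-1$ in base $q$ over the alphabet $\{1,m\}$, so that $c\in W$ exactly when $\delta'\preceq\sigma^{n}c\prec\gamma$ holds at every position with $c_n=1$.

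\emph{The heart.} It remains to show that, for $q<P_m$, only countably many $c\in\{1,m\}^{\infty}$ satisfy this window condition. First, iterating the inequalities bounds the runs: a block of $j$ consecutive $m$'s right after a $1$ forces $t_{n+j+1}<f_j$ with $f_0=m-1$, $f_{j+1}=qf_j-m$; since $q<P_m<R_A$ gives $f_0<\tfrac{m}{q-1}$ (the fixed point of the recursion), $f_j\downarrow-\infty$, so such a block has length at most some $K_m<\infty$. Symmetrically, for $m\ge 3$ we have $q<P_m<m$, and iterating the lower inequality along a block of $1$'s bounds its length by some $K_1<\infty$; thus $W$ lies in a subshift of finite type. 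Finite-type run bounds alone do not yield countability, so the decisive point is that $q<P_m$ keeps the boundary sequence $\gamma$ strictly below the Thue--Morse-type threshold attached to $r_{\{0,1,m\}}$: this is where the identity $(m-1)P_m(P_m-2)=1$ (i.e.\ $(P_m-1)^{2}=m/(m-1)$) enters, via comparisons of the values $\pi_{P_m}(m^{k}1^{\infty})$ with $m-1$, and it allows the Glendinning--Sidorov/Komornik--Loreti argument to be run on the window $[\delta',\gamma]$ and conclude that the admissible $c$ form a countable family (for each fixed $q$, essentially the eventually periodic sequences built from finitely many return words). For $m=2$ this degenerates completely: since $2=p_2\le q<P_2$ one checks that ``$1$ immediately followed by $2$'' already violates the window, so $W\subseteq\{2^{\infty}\}\cup\{2^{k}1^{\infty}:k\ge 0\}$, which is countable. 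With the reduction above, this proves that $U'_{m,q}$ is countable.

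\emph{Main obstacle.} The hard part is exactly the step of keeping $\gamma$ strictly below the Thue--Morse-type threshold: going from the finite-type run bounds to genuine countability of the window-admissible sequences. This needs a sufficiently precise description of the quasi-greedy expansion $\gamma$ of $m-1$ to see that the constraint $\sigma^{n}c\prec\gamma$ at the $1$-positions is subcritical; the slack in the bound $P_m$ — which is strictly below $r_{\{0,1,m\}}$ — is what makes room for this, and the two explicit formulas for $p_m$ in Theorem \ref{t12}(v), together with the arithmetic of $P_m$, are the tools. Carrying this through is the technical core, due to Lai \cite{Lai2011}.
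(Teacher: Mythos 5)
This statement is not proved in the paper at all: it is quoted from the literature, with items (i)--(viii) attributed to Komornik--Lai--Pedicini \cite{KomLaiPed2010} and item (ix) to Lai \cite{Lai2011}, so there is no in-paper argument to compare yours against. Judged as a self-contained proof, your attempt addresses only item (ix), and it takes (vi), (vii) and (viii) as given inputs; items (i)--(viii) are nowhere established.

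Even for (ix) there is a genuine gap exactly where you flag the ``main obstacle''. The reduction to $W=U'_{m,q}\cap\set{1,m}^{\infty}$, the lexicographic window at the $1$-positions, and the run-length bounds are all correct (and consistent with Lemma \ref{l42} and Proposition \ref{p21} of the paper), but, as you yourself note, a subshift of finite type is generically uncountable, so everything hinges on the unproved claim that the window $[\delta',\gamma]$ is subcritical for every $q<P_m$. The one quantitative reason you offer --- that $P_m$ lies strictly below $r_{\set{0,1,m}}$ --- is circular: the inequality $P_m\le r_m$ in \eqref{12} is itself deduced \emph{from} item (ix), so it cannot be used as an input to it. What would actually be required is a direct analysis of the quasi-greedy expansion $\gamma$ of $m-1$ over $\set{1,m}$ in bases up to $P_m$, using the identity $(m-1)P_m(P_m-2)=1$ to show that only countably many sequences fit the window; you end by conceding that ``carrying this through is the technical core, due to Lai''. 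Deferring the decisive step to the very reference the theorem cites means the proposal does not constitute an independent proof of the statement.
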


It follows from these results that 
\begin{equation}\label{12}
2\le p_m\le P_m\le r_m
\end{equation}
for all $m\ge 2$.

An interesting open problem is the determination of the critical bases $r_A$ for all three-letter alphabets. 
The fact that $r_A$ is transcendental for the simplest such alphabet $\set{0,1,2}$ indicates the difficulty of this problem.

A more tractable problem is suggested by property (viii) in Theorem \ref{t12} above: this implies that for $q>p_m$ not only $U'_{m,q}$, but already $U'_{m,q}\cap \set{1,m}^{\infty}$ is infinite.

Motivated by this example we may investigate the size of $U'_{A,q}\cap B^{\infty}$ instead of $U'_{A,q}$, with any given subset $B$ of the alphabet $A$.
We have always $U'_{A,q}\cap B^{\infty}\subset U'_{B,q}$, but the converse inclusion may fail.

To prove the first assertion we observe that if a sequence $(c_i)\in B^{\infty}$ does not belong to $U'_{B,q}$, then there is another sequence $(d_i)\in B^{\infty}$ satisfying $\pi_q(d_i)=\pi_q(c_i)$.
Since $B\subset A$, this shows that $(c_i)$ does not belong to $U'_{A,q}$ either.
The second assertion follows from the following counterexample:

\begin{example}\label{e15}
Let $A=\set{0,1,2}$, $B=\set{0,1}$ and $q=2$.
Then the constant sequence $1^{\infty}$ belongs to $B^{\infty}=\set{0,1}^{\infty}$, but it does not belong to $U'_{A,q}$ because 
\begin{equation*}
\pi_2(1^{\infty})=\pi_2(20^{\infty}).
\end{equation*}
\end{example}

Proposition \ref{p11} also implies the existence of $p_{A,B}, r_{A,B}\in (1,\infty]$ satisfying $p_{A,B}\le r_{A,B}$ and such that 
\begin{align*}
q\in (1,p_{A,B})&\Longrightarrow U'_{A,q}\cap B^{\infty}\quad\text{is finite;}\\
q\in (p_{A,B},r_{A,B})&\Longrightarrow U'_{A,q}\cap B^{\infty}\quad\text{is countably infinite;}\\
q\in (r_{A,B},\infty)&\Longrightarrow U'_{A,q}\cap B^{\infty}\quad\text{is uncountable.}
\end{align*}
If $B$ is empty or has a unique element, then the set $U'_{A,q}\cap B^{\infty}$ has at most one element, so that $p_{A,B}=r_{A,B}=\infty$. 
Otherwise the problem is non-trivial.

For ternary alphabets the above mentioned property may be expressed by the equality
\begin{equation*}
p_{\set{0,1,m},\set{1,m}}=p_{\set{0,1,m}}(=p_m).
\end{equation*}
Motivated by this we focus on the determination of
\begin{equation*}
r_m:=r_{\set{0,1,m},\set{1,m}},
\end{equation*}
i.e., we investigate only unique expansions not containing the zero digit. 
For brevity we write henceforth
\begin{equation*}
V'_{m,q}:=U'_{\set{0,1,m},q}\cap \set{1,m}^{\infty}.
\end{equation*}
We have thus
\begin{align*}
q\in (1,p_m)&\Longrightarrow V'_{m,q}=\set{m^{\infty}}\quad\text{is finite;}\\
q\in (p_m,r_m)&\Longrightarrow V'_{m,q}\quad\text{is countably infinite;}\\
q\in (r_m,\infty)&\Longrightarrow V'_{m,q}\quad\text{is uncountable.}
\end{align*}

Now we are ready to state our results on ternary alphabets.

First we will complete the inequalities \eqref{12}:
\begin{proposition}\label{p13}
We have 
\begin{equation}\label{13}
r_m\le R_m:=1+\frac{m}{m-1}
\end{equation}
for all $m\in [2,\infty)$.
\end{proposition}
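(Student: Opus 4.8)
The plan is to show that for every $q>R_m=1+\frac{m}{m-1}$ the set $V'_{m,q}$ is uncountable, which gives $r_m\le R_m$ since we already know (from Proposition \ref{p11} and the definition of the critical base $r_m$) that $V'_{m,q}$ is uncountable for $q>r_m$ and countable for $q<r_m$. Thus it suffices to produce a continuum of sequences in $\set{1,m}^{\infty}$ that have a unique expansion in base $q$ with the alphabet $\set{0,1,m}$.

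First I would recall the standard lexicographic (Parry/greedy-type) characterization of unique expansions: a sequence $(c_i)\in\set{0,1,m}^{\infty}$ is the unique expansion of its value $\pi_q(c)$ if and only if, writing the largest digit $m$ and smallest digit $0$, one has that whenever $c_n<m$ the tail $(c_{n+1},c_{n+2},\dots)$ is lexicographically smaller than the quasi-greedy expansion of $1$ (suitably scaled by the digit gaps), and whenever $c_n>0$ the reflected tail is lexicographically smaller than the same bound. Since we restrict to $(c_i)\in\set{1,m}^{\infty}$, the digit $0$ never occurs, so only the ``upper'' condition is active at positions where $c_n=1$ and only the ``lower'' condition at positions where $c_n=m$. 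Concretely, $(c_i)\in V'_{m,q}$ iff for every $n$ with $c_n=1$ the tail $c_{n+1}c_{n+2}\cdots$ is small enough that it cannot be rewritten using a higher digit, and symmetrically for $c_n=m$; the quantitative threshold is governed by the value $1/(q-1)$ compared with the gaps $1-0=1$ and $m-1$.

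Next I would exhibit an explicit family. The natural candidate is to take all sequences built freely from two carefully chosen blocks over $\set{1,m}$, say blocks $w_0$ and $w_1$ both beginning with $m$ and ending with $1$ (or some similar pattern), so that any concatenation lies in $\set{1,m}^{\infty}$ and automatically satisfies both lexicographic inequalities with strict slack. The key inequality to verify is that for $q>R_m$, i.e. $q-1>\frac{m}{m-1}$, equivalently $\frac{1}{q-1}<\frac{m-1}{m}$, the ``dangerous'' competing tails are strictly dominated: after a digit $1$ the most one can gain by switching is $(m-1)/q$ worth of value, which must be bounded by $\sum_{i\ge 2}m/q^i=\frac{m}{q(q-1)}$-type quantities, and the strict inequality $q>R_m$ is exactly what makes this comparison strict with room to spare, so that a positive-density set of free choices is admissible. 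Since a free choice between two blocks at infinitely many positions yields $2^{\aleph_0}$ distinct sequences, and all of them map to distinct reals (distinct sequences in $\set{1,m}^{\infty}$ with unique expansions have distinct values), we get a continuum inside $V'_{m,q}$.

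The main obstacle will be pinning down the precise admissible blocks and checking the lexicographic inequalities uniformly for all $q>R_m$ rather than just for $q$ large: near $q=R_m$ the slack in the inequalities shrinks to zero, so the blocks may have to be chosen long (depending on how close $q$ is to $R_m$) to keep the comparison strict, or one must argue by a limiting/monotonicity argument using Proposition \ref{p11}(iii) to reduce to $q$ slightly above $R_m$. A clean way around this is: fix any $q>R_m$, choose an integer $k=k(q)$ large enough that $q^k(q-1)>\frac{m}{m-1}\cdot(\text{something})$, and use the two blocks $m1^k$ and $m1^{k+1}$ (or $mm1^{k}$, etc.); then the worst-case tail after a $1$ is of the form $1^{j}m\cdots$ with $j\le k+1$, whose excess value is controlled by $q^{-(k+1)}$ and hence beaten by the geometric tail once $k$ is large. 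Verifying this, and the symmetric condition at the digit $m$, is the routine-but-delicate computational core; everything else (distinctness of values, invoking the dichotomy to conclude $r_m\le R_m$) is formal.
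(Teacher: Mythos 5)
Your overall strategy coincides with the paper's: for each $q>R_m$ (the paper in fact handles $q\ge R_m$) exhibit a continuum of freely concatenated blocks over $\set{1,m}$ and verify the sufficient conditions of Proposition \ref{p21}(i). However, there is a genuine error in your reading of those conditions, and it propagates into a choice of blocks that does not work. You assert that, since the digit $0$ never occurs, ``only the upper condition is active at positions where $c_n=1$ and only the lower condition at positions where $c_n=m$.'' This is false: $1$ is an \emph{interior} digit of $\set{0,1,m}$, so at every position with $c_n=1$ \emph{both} conditions must hold, namely $\sum_{i\ge1}c_{n+i}q^{-i}<m-1$ \emph{and} $\sum_{i\ge1}(m-c_{n+i})q^{-i}<1$; the absence of the digit $0$ only makes the condition attached to positions carrying a $0$ vacuous. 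The second, ``reflected'' inequality at $c_n=1$ is precisely the one your blocks violate.

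Concretely, with blocks $m1^{k}$ and $m1^{k+1}$ a digit $1$ can be followed by a tail $1^{j}m\cdots$ with $j$ as large as $k$, so
\begin{equation*}
\sum_{i\ge1}\frac{m-c_{n+i}}{q^{i}}\ \ge\ (m-1)\sum_{i=1}^{j}\frac{1}{q^{i}}\ =\ \frac{m-1}{q-1}\left(1-q^{-j}\right),
\end{equation*}
and this exceeds $1$ for large $j$ whenever $q<m$. Since $R_m=1+\frac{m}{m-1}<m$ for all $m>(3+\sqrt{5})/2\approx 2.618$, for such $m$ and $q$ just above $R_m$ your construction gets \emph{worse}, not better, as $k$ grows (already the factor $11$ violates the condition whenever $q\le m-1$, and $R_m<m-1$ once $m>2+\sqrt{2}$). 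The remedy is to use the dual blocks: the paper takes $\set{m^{\ell}1:\ell\ge k}^{\infty}$, i.e.\ long runs of the \emph{large} digit separated by isolated $1$'s. Then after a $1$ the reflected tail is $0^{\ell}(m-1)0^{\ell'}(m-1)\cdots$ with value at most $(m-1)/(q^{k+1}-1)<1$ for $k$ large; the upper condition at $c_n=1$ follows from $\sum c_{n+i}q^{-i}<m/(q-1)\le m-1$, which is exactly where the hypothesis $q\ge R_m$ enters; and the condition at $c_n=m$ is automatic from $q>2$. As written, your verification fails and the proof does not go through.
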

Observe that $R_m$ is equal to the right side expression of \eqref{11} for the alphabet $\set{0,1,m}$.

Then we will determine the critical base $r_m$ for $m$ belonging to some special intervals.

We start with the first connected component (see property (v) in  Theorem \ref{t12})
\begin{equation*}
[\mu_d,M_d)=[2,1+\alpha)
\end{equation*}
of $[2,\infty)\setminus C$, where $\alpha=1.32472\ldots$ denotes the first Pisot number.
This corresponds to the smallest ``admissible sequence'' $d=0^{\infty}$, as explained below, in Subsection \ref{ss91} of the Appendix.
We recall (see \eqref{92} below) that $p_m=m$ for all $m\in [2,1+\alpha]$.

\begin{proposition}\label{p14}\mbox{}
If $m\in [2,1+\alpha]=[2,2.32472\ldots]$, then $r_m$ is the unique positive solution $q$ of the equation
\begin{equation*}
\pi_q(m1^{\infty})=m-1.
\end{equation*}
Moreover, $V'_{m,r_m}$ is still countable.
\end{proposition}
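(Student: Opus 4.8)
The plan is to treat $V'_{m,q}$ through its description as a subshift of $\set{1,m}^{\infty}$, with $A=\set{0,1,m}$. Write $q_0$ for the base to be identified: since $q\mapsto\pi_q(m1^{\infty})=\frac mq+\frac1{q(q-1)}$ is continuous and strictly decreasing on $(1,\infty)$ from $+\infty$ to $0$, the equation $\pi_q(m1^{\infty})=m-1$ has a unique root $q_0>1$. First I would extract from the standard lexicographic characterization of unique expansions for the alphabet $\set{0,1,m}$ the following reduced form, valid for every $q>p_m=m$: a sequence $(c_i)\in\set{1,m}^{\infty}$ lies in $V'_{m,q}$ if and only if
\begin{equation*}
\pi_q(c_{n+1},c_{n+2},\dots)<m-1\qtq{whenever}c_n=1 .
\end{equation*}
The point is that the inequalities bounding a tail $\pi_q(c_{n+1},c_{n+2},\dots)$ from below hold automatically as soon as $q>m$ (hence $q>2$), since such a tail is always $\ge\frac1{q-1}$; only the upper bound at the digit $1$ survives. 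Using $p_m=m$ on $[2,1+\alpha]$ together with the monotonicity above, an elementary computation also gives $q_0\ge p_m$ on this interval, with equality precisely at $m=1+\alpha$ (equivalently, $m^3-3m^2+2m-1=0$), because $\pi_{p_m}(m1^{\infty})=1+\frac1{m(m-1)}\ge m-1$ there.

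Next I would obtain $r_m\ge q_0$ by proving $V'_{m,q_0}$ countable. Suppose $q_0>p_m$ and some $(c_i)\in V'_{m,q_0}$ has $c_n=1$ and $c_{n+1}=m$; since every digit is $\ge1$,
\begin{equation*}
\pi_{q_0}(c_{n+1},c_{n+2},\dots)\ge\pi_{q_0}(m1^{\infty})=m-1,
\end{equation*}
contradicting the reduced condition. Hence in any element of $V'_{m,q_0}$ a digit $1$ is never followed by $m$, so $V'_{m,q_0}\subseteq\set{m^j1^{\infty}:j\ge 0}\cup\set{m^{\infty}}$, which is countable. In the remaining case $q_0=p_m$ (i.e.\ $m=1+\alpha$), Theorem \ref{t12}(vi) gives $V'_{m,p_m}=\set{m^{\infty}}$ (alternatively, at $q=p_m$ the lower inequality at the digit $1$ ceases to be slack and the same argument again forces $(c_i)=m^{\infty}$). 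Either way $V'_{m,q_0}$ is countable, so by the trichotomy following Proposition \ref{p11} we conclude $q_0\le r_m$, which also settles the ``moreover'' statement.

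For the reverse inequality $r_m\le q_0$ I would show that $V'_{m,q}$ is uncountable for every $q>q_0$. Fix such a $q$; then $q>q_0\ge p_m$, so $q>2$, the reduced characterization applies, and $\pi_q(m1^{\infty})<m-1$. The periodic sequences $(m1^{k-1})^{\infty}$ agree with $m1^{\infty}$ on their first $k$ digits and hence converge to it, so by continuity of $\pi_q$ I may fix $k$ with
\begin{equation*}
\pi_q\bigl((m1^{k-1})^{\infty}\bigr)<m-1 .
\end{equation*}
I claim $\set{1^k,\,1^{k-1}m}^{\infty}\subseteq V'_{m,q}$. Indeed, if $c$ is a concatenation of the blocks $1^k$ and $1^{k-1}m$, a digit $1$ of $c$ is immediately followed by $m$ only when it is the last $1$ of a block $1^{k-1}m$, in which case the tail after it begins $m1^{k-1}\dots$; a short case check then shows that after \emph{any} digit $1$ the tail $(c_{n+1},c_{n+2},\dots)$ is lexicographically $\preceq(m1^{k-1})^{\infty}$. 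As $q>2$, the $\pi_q$-order on $\set{1,m}^{\infty}$ coincides with the lexicographic order, so $\pi_q(c_{n+1},c_{n+2},\dots)\le\pi_q\bigl((m1^{k-1})^{\infty}\bigr)<m-1$ and $c\in V'_{m,q}$. Since the block substitution $0\mapsto1^k$, $1\mapsto1^{k-1}m$ is injective on $\set{0,1}^{\infty}$, the set $\set{1^k,1^{k-1}m}^{\infty}$, and therefore $V'_{m,q}$, has the power of continuum. Thus $q\ge r_m$ for all $q>q_0$, i.e.\ $r_m\le q_0$; combined with the previous step, $r_m=q_0$.

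The substantive step is the first one: recognising that on $[2,1+\alpha]$ the univoque property within $\set{1,m}^{\infty}$ reduces to the one-parameter family of upper bounds at the digit $1$, whose extremal admissible tail is exactly $m1^{\infty}$. Once this is in place, the two inequalities $r_m\ge q_0$ and $r_m\le q_0$ become short lexicographic and monotonicity arguments. The point needing care is the endpoint $m=1+\alpha$: there $q_0=p_m$, so the reduced characterization (valid only for $q>p_m$) must be supplemented by Theorem \ref{t12}(vi), and the middle interval $(p_m,r_m)$ of the trichotomy is empty.
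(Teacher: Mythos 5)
Your argument is correct and is essentially the paper's own proof: for $q$ at or below the root $q_0$ of $\pi_q(m1^{\infty})=m-1$ you show the block $1m$ is forbidden, so $V'_{m,q}\subset\set{m^{\infty}}\cup\set{m^j1^{\infty}:j\ge 0}$ is countable, and for $q>q_0$ you exhibit a continuum family of sequences in which every $m$ is followed by a long run of $1$'s (the paper uses $\set{m1^{\ell}:\ell\ge k}^{\infty}$ where you use the equivalent $\set{1^k,1^{k-1}m}^{\infty}$), with the same lexicographic verification via Lemma \ref{l41} and Lemma \ref{l42}. The one point to make explicit is that the ``only if'' direction of your reduced characterization rests on Proposition \ref{p21} (ii) and therefore requires $q_0\le R_m=1+\frac{m}{m-1}$, which you never check; it is automatic here, since $m-1=\pi_{q_0}(m1^{\infty})<\pi_{q_0}(m^{\infty})=\frac{m}{q_0-1}$ forces $q_0<R_m$, and this is precisely the inequality $r(m)<R_m$ that the paper's Lemma \ref{l51} records before invoking the characterization.
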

See Figure \ref{f11}.

\begin{figure}
\includegraphics[scale=0.8]{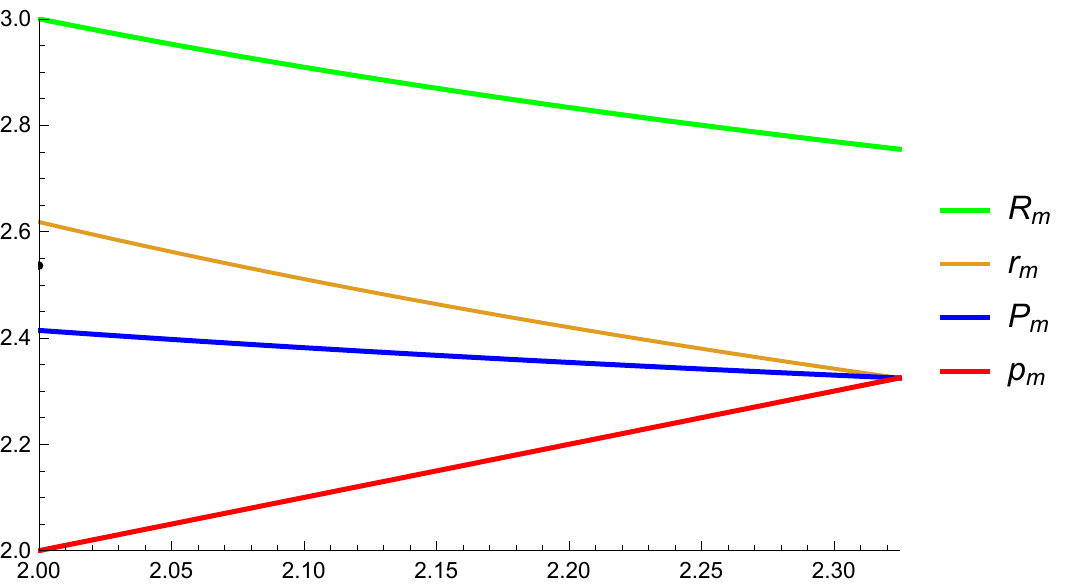}
\caption{Proposition \ref{p14}: $p_m$, $P_m$, $r_m$ and $R_m$ in $[2,2.32472]$}\label{f11}
\end{figure}

\begin{remark}
A direct computation (see \eqref{94} below) shows that 
\begin{equation*}
r_m=\frac{2m-1+\sqrt{4m-3}}{2m-2}
\end{equation*}
in the component $[2,1+\alpha]$.
In particular, $r_2=1+\varphi\approx 2.61803$.
\end{remark}

Next we consider the connected component
\begin{equation*}
(m_d,M_d)=(2.80194\ldots ,4.54646\ldots)
\end{equation*}
of $(2,\infty)\setminus C$; see Figure \ref{f12}.
This corresponds to the smallest ``admissible sequence'' $d=(10)^{\infty}$, as explained in Subsection \ref{ss93} of the Appendix, where the formula of $p_m=\max\set{p_m',p_m''}$ is also given.

\begin{figure}
\includegraphics[scale=0.8]{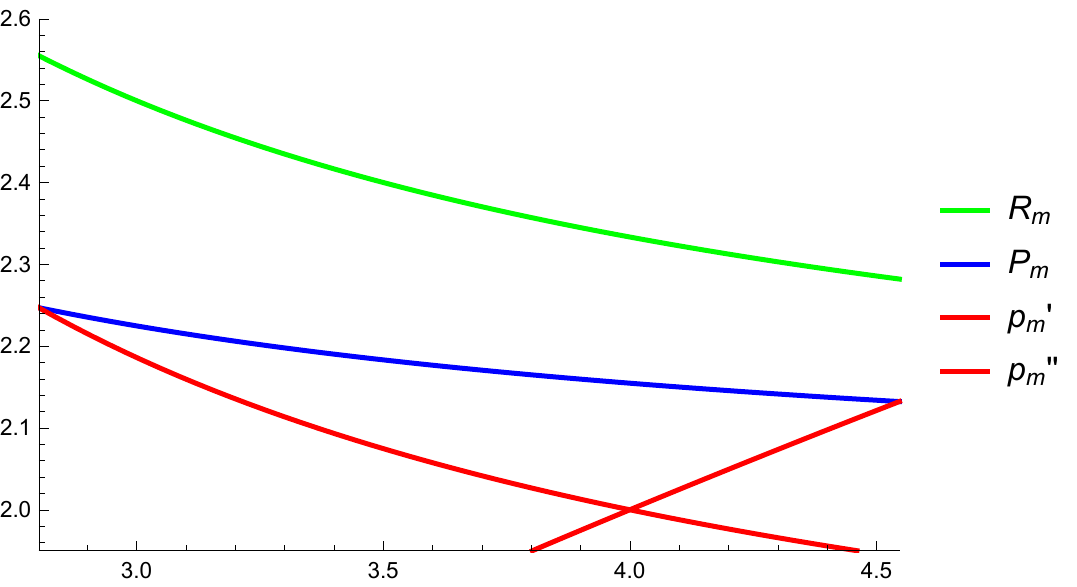}
\caption{Proposition \ref{p15}: $p''_m$, $p'_m$, $P_m$ and $R_m$ in $[2.80194,4.54646]$}\label{f12}
\end{figure}

We will determine $r_m$ for $m$ belonging to three subintervals
\begin{equation*}
[m_d,m_1],\quad [m_2,m_3],\quad [m_4,M_d]
\end{equation*}
of $[m_d,M_d]$,  with 
\begin{equation*}
m_1\approx 2.91286,\quad m_2\approx 2.992,\quad m_3\approx 3.10204,\quad m_4\approx 3.30278.
\end{equation*}
The precise definitions of these numbers will be given during the proof of the following proposition.

\begin{proposition}\label{p15}\mbox{}

\begin{enumerate}[\upshape (i)]
\item If $m\in [m_d,m_1]\approx [2.80194,2.91286]$, then $r_m$ is the unique positive solution of the equation
\begin{equation*}
\pi_q(m^{\infty}-(1m)^{\infty})=1.
\end{equation*}
\item If $m\in [m_2,m_3]\approx [2.992,3.10204]$, then  $r_m$ is the unique positive solution of the equation
\begin{equation*}
\pi_q\left(mm1(m11m)^{\infty}\right)=m-1.
\end{equation*}
\item If $m\in [m_4,M_d]\approx [3.30278,4.54646]$, then $r_m$ is the unique positive solution of the equation
\begin{equation*}
\pi_q(m(m1)^{\infty})=m-1.
\end{equation*}
\end{enumerate}
\end{proposition}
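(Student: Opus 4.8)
The tool is the lexicographic description of $V'_{m,q}$. For $q\le R_m$ --- which, by Proposition~\ref{p13}, includes every base that could equal $r_m$ --- a sequence $(c_i)\in\set{1,m}^{\infty}$ belongs to $V'_{m,q}$ if and only if no admissible single-digit change $1\mapsto 0$, $1\mapsto m$ or $m\mapsto 1$ can be compensated in the tail; writing $c^{(n)}:=(c_{n+1},c_{n+2},\dots)$ for the tail after position $n$, this says that after every occurrence of the digit $1$ one has
\begin{equation*}
\frac{m}{q-1}-1<\pi_q(c^{(n)})<m-1,
\end{equation*}
and after every occurrence of the digit $m$ one has $\pi_q(c^{(n)})>\frac{m}{q-1}-(m-1)$. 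Since for a two-letter alphabet and $q>2$ the lexicographic order and the value order on $\set{1,m}^{\infty}$ agree, these are lexicographic comparisons of the tails with three ``critical'' $\set{1,m}$-sequences depending monotonically on $q$. Each of the three equations records the base $q_0=q_0(m)$ at which a specific explicit configuration --- eventually periodic in each case: built out of $(1m)^{\infty}$ in (i), of $(m11m)^{\infty}$ in (ii), of $(m1)^{\infty}$ in (iii) --- sits exactly on one of these boundaries; this is the value the proposition asserts to be $r_m$. Since $q\mapsto\pi_q(\mathbf t)$ is continuous and strictly decreasing for every fixed sequence $\mathbf t$, each equation has a unique positive solution $q_0(m)$, and one checks en route that $2<q_0(m)\le R_m$ on the indicated interval.

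To prove $r_m\le q_0$ I would show that $V'_{m,q}$ has the power of the continuum for every $q>q_0$; by Proposition~\ref{p11}(iii) it suffices to treat $q$ in a right neighbourhood of $q_0$ inside $(1,R_m]$. Fix such a $q$. Since the governing critical sequence has then moved strictly past its threshold form, one can exhibit two finite words $W_0\ne W_1$ over $\set{1,m}$ --- obtained from that critical sequence by inserting a bounded block, and thus depending on $q$ --- such that each member of a Cantor set of concatenations of $W_0$ and $W_1$ satisfies the three tail conditions above. Indeed every tail of such a concatenation is, after a bounded prefix, again a concatenation of $W_0$ and $W_1$, so the requirement reduces to a finite list of strict lexicographic inequalities --- one per state of the evident finite automaton --- each of which holds for $q>q_0$ by the monotonicity just mentioned together with the defining equation. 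Since this Cantor set lies in $V'_{m,q}$ and has cardinality continuum, $r_m\le q_0$.

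For the converse $r_m\ge q_0$ it is enough, again by Proposition~\ref{p11}(iii) (which yields $V'_{m,q}\subset V'_{m,q_0}$ for $1<q<q_0\le R_m$), to show that $V'_{m,q_0}$ itself is countable; this also gives that $V'_{m,r_m}$ is countable, as in Proposition~\ref{p14}. I would prove it directly from the three tail conditions. For the bases $q_0$ at hand the two-sided condition after a digit $1$ already confines the admissible sequences to a sofic subshift of a rigid type --- in a suitable sub-range it forces that the digit $1$ is never immediately repeated, so that $1$'s occur isolated between runs of $m$'s --- and the condition recorded by the defining equation, being an equality against an \emph{eventually periodic} sequence, then pins the remaining freedom down so tightly that only countably many eventually-periodic sequences survive; in particular the subshift has zero entropy. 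This step is exactly where the breakpoints $m_1\approx2.91286$, $m_2\approx2.992$, $m_3\approx3.10204$, $m_4\approx3.30278$ are defined: they are the values of $m$ at which the shape of the governing configuration changes --- equivalently, at which two of the three critical sequences cross in lexicographic order --- so the argument has to be run separately on $[m_d,m_1]$, on $[m_2,m_3]$ and on $[m_4,M_d]$, while on the two intermediate pieces $[m_1,m_2]$ and $[m_3,m_4]$ the governing sequence is no longer eventually periodic and the proposition makes no claim. Combining the two inequalities gives $r_m=q_0(m)$.

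The crux is the countability statement for $V'_{m,q_0}$: one has to control \emph{all} tails of a candidate sequence simultaneously, and the interaction of the two-sided ``after a $1$'' constraint with the ``after an $m$'' constraint genuinely changes across the three subintervals --- which is precisely why three distinct equations appear and why $m_1,\dots,m_4$ must be produced during the proof rather than written down in advance. The continuum half, by contrast, is routine once the right pair of words $W_0,W_1$ has been identified: the only real work there lies in finding those two words and checking the short finite list of lexicographic inequalities.
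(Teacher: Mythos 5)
Your overall architecture coincides with the paper's: characterize $V'_{m,q}$ by the two tail inequalities after each digit $1$ (the paper's Lemma \ref{l42}, valid for $2<q\le R_m$), use the agreement of lexicographic and value order on $\set{1,m}^{\infty}$ for $q>2$ (Lemma \ref{l41}), prove $r_m\le q_0$ by exhibiting a continuum of concatenations of two words for $q>q_0$, and prove $r_m\ge q_0$ by showing $V'_{m,q_0}$ is countable and invoking the monotonicity $V'_{m,q}\subset V'_{m,q_0}$ from Proposition \ref{p11}(iii). The continuum half is indeed carried out in the paper exactly as you describe (e.g.\ with the word sets $\set{(1m)^{\ell}1:\ell\ge k}$, $\set{1m(11mm)^k,1m(11mm)^{k+1}}$, $\set{m(m1)^{\ell}:\ell\ge k}$), though even there one inequality requires importing $\delta'=(m1)^{\infty}\in V'_{m,q}$ for $q>p_m$ from the KLP results, which you do not mention.

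The genuine gap is the countability statement at $q=q_0$, which you yourself flag as ``the crux'' but then replace by a heuristic (``pins the remaining freedom down so tightly\dots in particular the subshift has zero entropy''). This is where essentially all of the work of the proposition lies, and it does not follow formally from the defining equation alone. The paper's argument is concretely combinatorial: one uses Lemma \ref{l43} to certify specific forbidden blocks --- $11$ and $1mm$ in cases (i) and (iii), and \emph{seven} words of length up to $7$ plus an additional block $(1mm1)(m11mm1)$ in case (ii) --- and then runs a finite-automaton analysis to list the surviving sequences explicitly ($m^{\infty}$, $m^*(1m)^{\infty}$, etc.). Certifying these blocks requires \emph{auxiliary} inequalities beyond the defining equation: $\pi_q(mm1^{\infty})\ge m-1$ in case (i), $q\le m-1$ in case (iii), and a batch of computer-verified inequalities in case (ii). The breakpoints $m_1,\dots,m_4$ are defined precisely as the parameters where these auxiliary curves cross the curve $q=q_0(m)$ in the $(m,q)$-plane --- not, as you suggest, where the three critical sequences of the lexicographic characterization cross each other --- so your proposal does not actually produce them, and without the auxiliary inequalities the forbidden-block argument fails outside the stated subintervals. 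Until this step is filled in for each of the three cases separately (and the paper itself concedes that case (ii) rests partly on symbolic computation), the proof is an outline rather than a proof.
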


\noindent See Figures \ref{f13}, \ref{f14}, \ref{f15} for the three separate cases and Figure \ref{f16} for a global picture.
Part of Figure \ref{f16} is shown in greater detail in Figure \ref{f17} to understand the situation concerning the curves $r_m$ of the middle and  right intervals.

\begin{figure}
\includegraphics[scale=0.8]{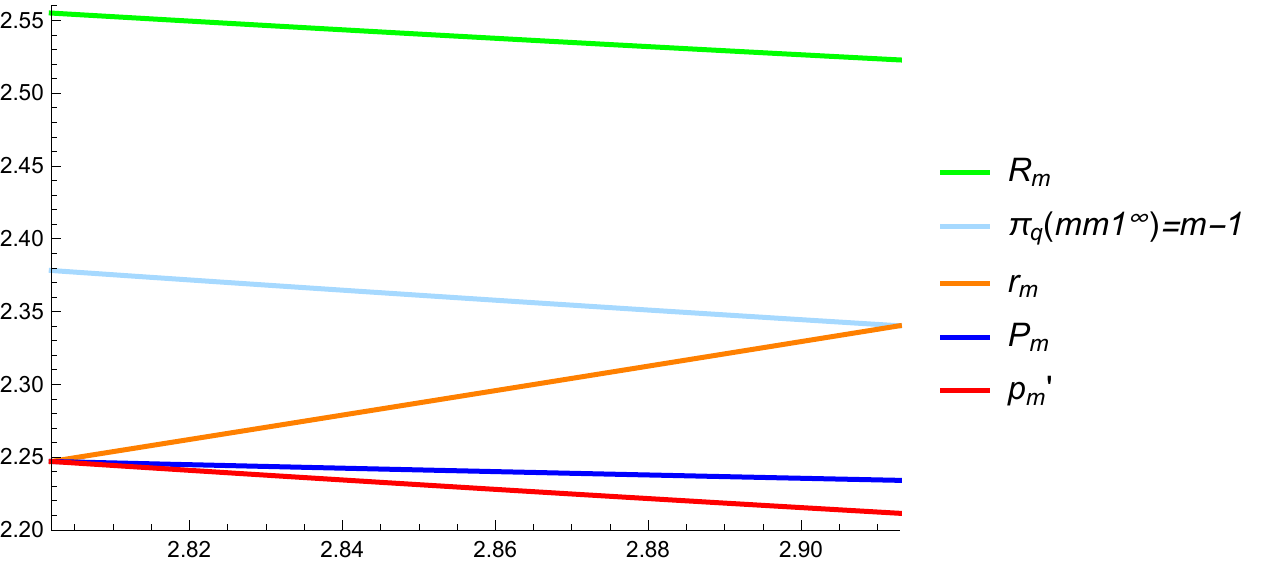}
\caption{Proposition \ref{p15} (i): $p_m$, $P_m$, $r_m$, $R_m$ and an auxiliary curve in $[2.80194,2.91286]$}\label{f13}
\end{figure}

\begin{figure}
\includegraphics[scale=0.8]{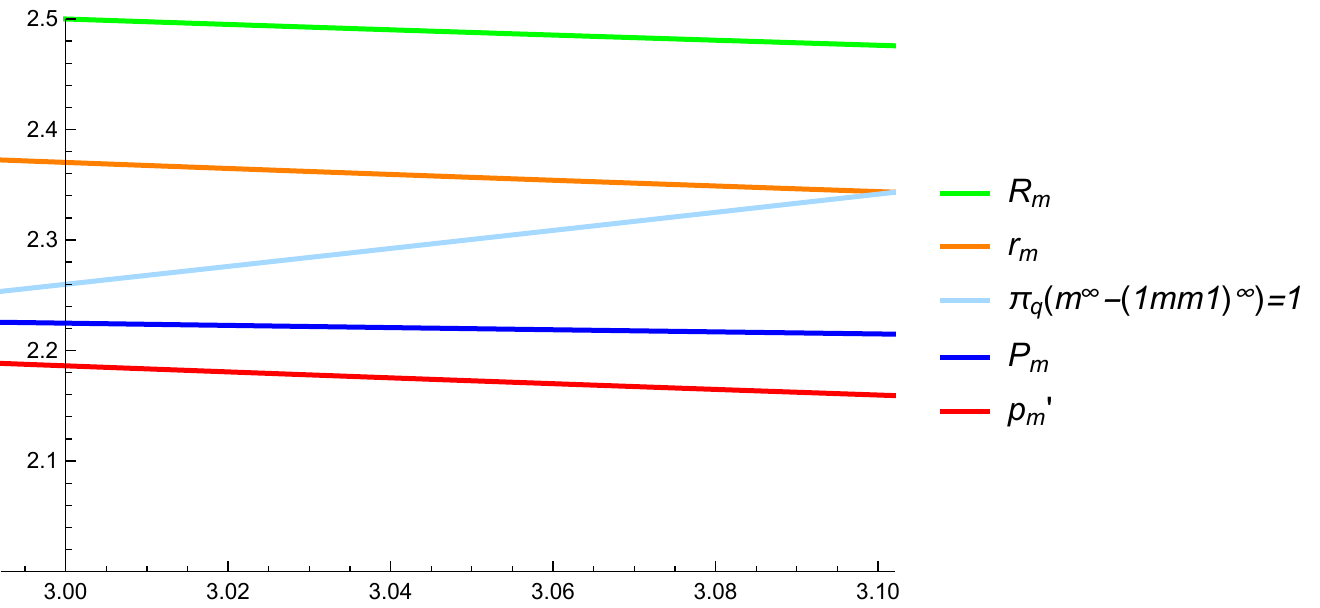}
\caption{Proposition \ref{p15} (ii): $p_m$, $P_m$, $r_m$, $R_m$ and an auxiliary curve in $[2.992,3.10214]$}\label{f14}
\remove{P14-2.pdf=fig2-20160309.pdf}
\remove{Before we had the file f54.pdf labeled as f54.}
\end{figure}

\begin{figure}
\includegraphics[scale=0.8]{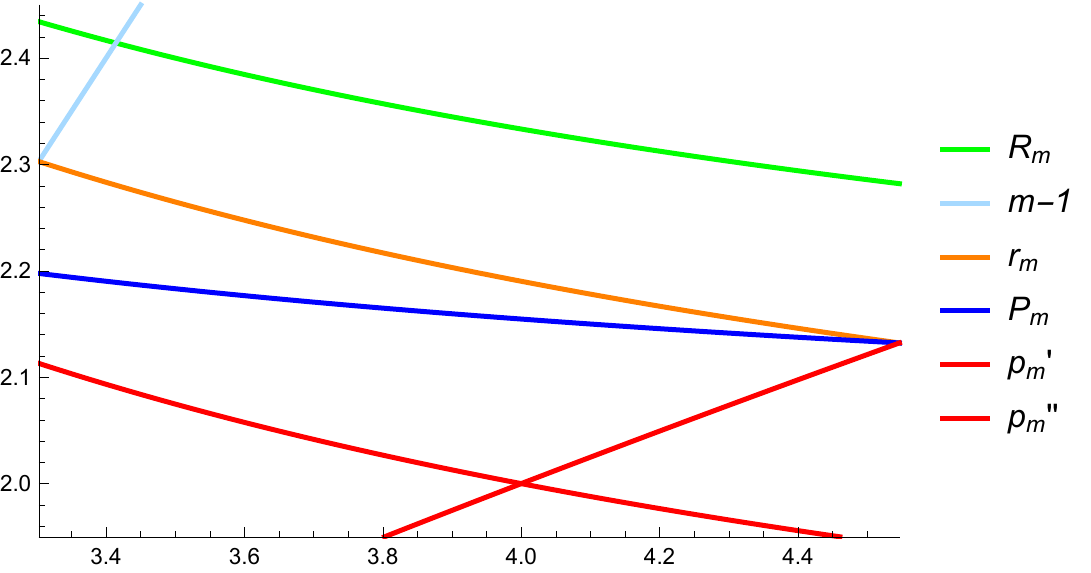}
\caption{Proposition \ref{p15} (iii): $p''_m$, $p'_m$, $P_m$, $r_m$, $R_m$ and an auxiliary curve in $[3.30278,4.54646]$}\label{f15}
\end{figure}

\begin{figure}
\includegraphics[scale=0.8]{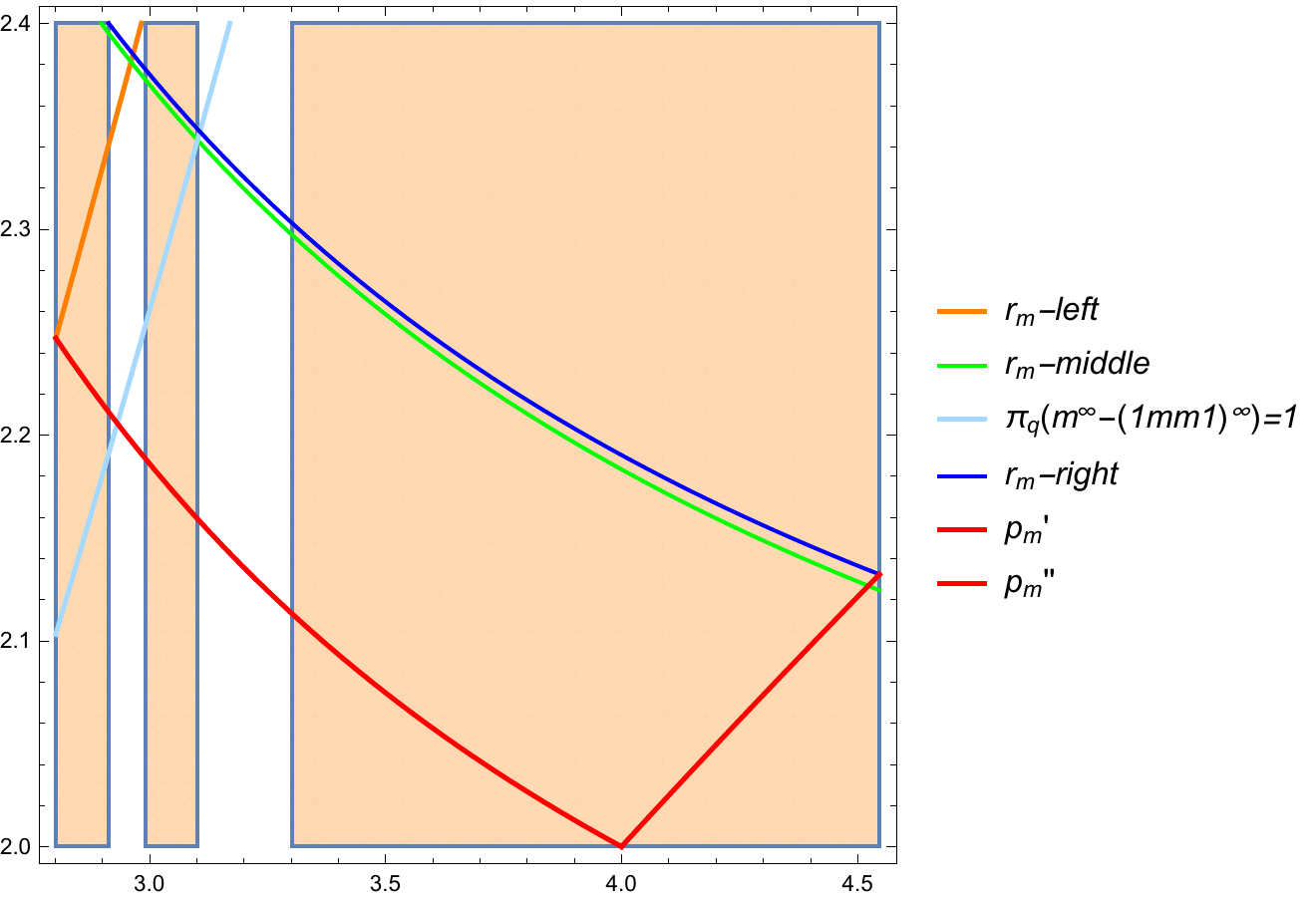}
\caption{Proposition \ref{p15}: global picture in $[3.30278,4.54646]$}\label{f16}
\end{figure}

\begin{figure}
\includegraphics[scale=0.8]{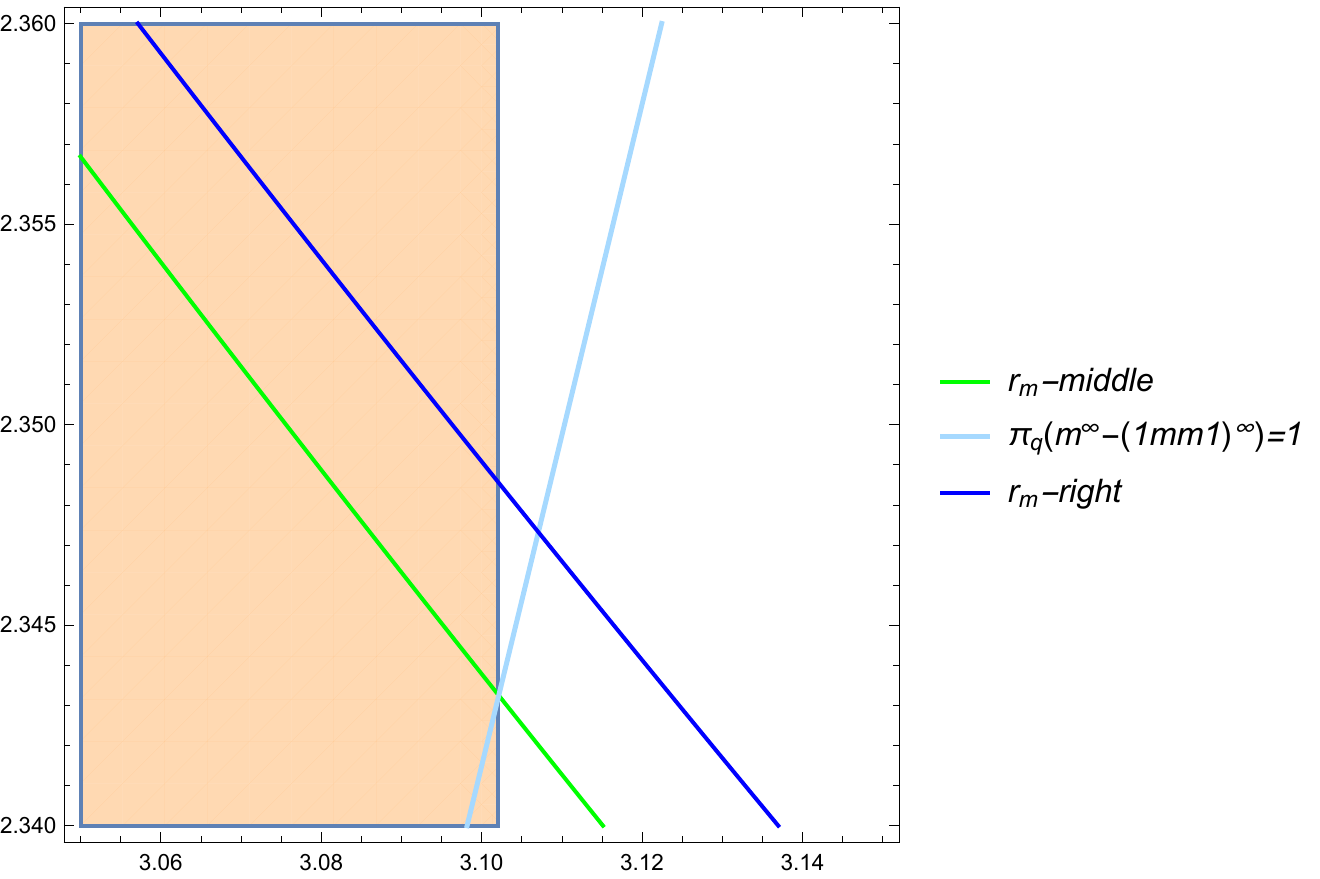}
\caption{Proposition \ref{p15}: the mutual positions of the curves defining $r_m$ in the middle and right zones}\label{f17}
\end{figure}

\begin{remarks}\mbox{}
\begin{enumerate}[\upshape (i)]
\item A straightforward computation shows that $r_m$ is  the unique solution $q>2$ of the polynomial equations
\begin{align*}
&(m-1)q=q^2-1,\\
&(m-1)(q^6-2q^5+q^4-q^3-q^2+2q-1)=q^5+q^3,\\
&(m-1)(q^3-q^2-2q+1)=q^2+q
\end{align*}
in the three cases, respectively.
In particular,
\begin{equation*}
r_m=\frac{m-1+\sqrt{(m-1)^2+4}}{2}
\end{equation*}
in the first case (see \eqref{97}).
\item While $p_m$ is given by two formulas in $[m_d,M_d]$ (see property (v) in Theorem \ref{t12}), $r_m$ is given by three different formulas in different subintervals of $[m_d,M_d]$. 
This indicates that the determination of $r_m$ might be more difficult than that of $p_m$. 
\end{enumerate}
\end{remarks}

In the following sections we prove Propositions \ref{p11}--\ref{p15}.
All proofs are mathematically complete, except that of Propositions \ref{p15} (ii). 
For the latter we admit some intermediate results obtained by symbolic computations and computer simulations; otherwise the proof would become too long.
In order to facilitate the reading, many technical computations are collected in an Appendix at the end of the paper.

\section{Proof of Proposition \ref{p11}}\label{s2}

Let us write $A=\set{a_1<\cdots<a_J}$, $J\ge 2$.
We recall\footnote{See, e.g.,  the proof of \cite[Theorem 2.2]{KomLaiPed2010}.} the following description of $U'_{A,q}$:

\begin{proposition}\label{p21}
Let $(c_i)\in A^{\infty}$ and $q>1$. 
\begin{enumerate}[\upshape (i)]
\item If $(c_i)\in A^{\infty}$ satisfies for some $q>1$ the conditions
\begin{align}
&\sum_{i=1}^\infty \frac{c_{n+i}-a_1}{q^i}  < a_{j+1}-a_j \qtq{whenever}  c_n=a_j<a_J\label{21}
\intertext{and}
&\sum_{i=1}^\infty \frac{a_J-c_{n+i}}{q^i}  < a_j-a_{j-1} \qtq{whenever}  c_n=a_j>a_1,\label{22}
\end{align}
then $(c_i)\in U'_{A,q}$. 
\item Conversely, if 
\begin{equation}\label{23}
1<q\le 1+\frac{a_J-a_1}{\max_{j>1} \set{a_j-a_{j-1}}} (\le J)
\end{equation}
and $(c_i)\in U'_{A,q}$, then the inequalities \eqref{21} and \eqref{22} are satisfied.
\end{enumerate}
\end{proposition}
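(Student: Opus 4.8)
The plan is to treat the two implications separately, with a surjectivity lemma bridging the converse direction. For part (i) I would argue by contradiction, looking at the first index at which two expansions differ. Suppose $(c_i)$ satisfies \eqref{21} and \eqref{22} but $\pi_q(d_i)=\pi_q(c_i)$ for some $(d_i)\in A^\infty$ with $(d_i)\ne(c_i)$, and let $n$ be least with $c_n\ne d_n$. Cancelling the common initial block and multiplying by $q^n$ gives
\begin{equation*}
d_n-c_n=\sum_{i=1}^\infty\frac{c_{n+i}-d_{n+i}}{q^i}.
\end{equation*}
If $c_n<d_n$, write $c_n=a_j$; then $a_j<a_J$ and $d_n\ge a_{j+1}$, so bounding $d_{n+i}\ge a_1$ yields
\begin{equation*}
a_{j+1}-a_j\le d_n-c_n\le\sum_{i=1}^\infty\frac{c_{n+i}-a_1}{q^i},
\end{equation*}
contradicting \eqref{21}. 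If $c_n>d_n$, then $c_n=a_j>a_1$ and $d_n\le a_{j-1}$, and bounding $d_{n+i}\le a_J$ gives $a_j-a_{j-1}\le\sum_{i\ge1}(a_J-c_{n+i})q^{-i}$, contradicting \eqref{22}. Hence $(c_i)\in U'_{A,q}$.

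For part (ii) I would first prove a surjectivity lemma: if $1<q\le R_A$, then $\pi_q(A^\infty)=\left[\tfrac{a_1}{q-1},\tfrac{a_J}{q-1}\right]=:I$. The set $\Lambda:=\pi_q(A^\infty)$ is compact, contained in $I$, attains both endpoints (via $a_1^\infty$ and $a_J^\infty$), and satisfies $\Lambda=\bigcup_{a\in A}q^{-1}(a+\Lambda)$. The consecutive blocks $q^{-1}(a_{j-1}+I)$ and $q^{-1}(a_j+I)$ overlap or abut iff $a_j-a_{j-1}\le\tfrac{a_J-a_1}{q-1}$, i.e. iff $q\le1+\tfrac{a_J-a_1}{a_j-a_{j-1}}$, and this holds for every $j>1$ precisely when $q\le R_A$. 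Consequently $\bigcup_{a\in A}q^{-1}(a+I)$ is connected with endpoints $\tfrac{a_1}{q-1}$ and $\tfrac{a_J}{q-1}$, hence equals $I$; since $I$ is then a nonempty compact set fixed by the associated iterated function system, uniqueness of the attractor (equivalently, a direct greedy digit selection keeping the running remainder inside $I$) forces $\Lambda=I$. The side remark $R_A\le J$ follows from $\max_{j>1}(a_j-a_{j-1})\ge(a_J-a_1)/(J-1)$.

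With the lemma in hand, assume \eqref{23} and $(c_i)\in U'_{A,q}$, and suppose \eqref{21} fails at some $n$, so $c_n=a_j<a_J$ and $\sum_{i\ge1}(c_{n+i}-a_1)q^{-i}\ge a_{j+1}-a_j$. Put $y:=\sum_{i\ge1}c_{n+i}q^{-i}-(a_{j+1}-a_j)$. The failure gives $y\ge\tfrac{a_1}{q-1}$, while $y<\sum_{i\ge1}c_{n+i}q^{-i}\le\tfrac{a_J}{q-1}$; thus $y\in I$ and by the lemma $y=\pi_q(e_i)$ for some $(e_i)\in A^\infty$. Then $(c_1,\dots,c_{n-1},a_{j+1},e_1,e_2,\dots)$ is an expansion of $x=\pi_q(c_i)$ that differs from $(c_i)$ in position $n$, contradicting uniqueness; hence \eqref{21} holds, and the failure of \eqref{22} is handled symmetrically by replacing $c_n=a_j$ with $a_{j-1}$ and pushing the increment $a_j-a_{j-1}$ into the tail.

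The one substantive step is the surjectivity lemma — precisely, checking that the no-gap condition is \emph{exactly} $q\le R_A$, with care at the boundary $q=R_A$, where consecutive images merely touch but still form a connected set. After that, part (i) is the usual first-difference estimate and part (ii) is single-digit surgery; the only things to watch are which of \eqref{21}/\eqref{22} to invoke and the strict-versus-nonstrict inequalities when placing $y$ in $I$.
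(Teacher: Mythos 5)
Your proof is correct. The paper itself does not prove Proposition \ref{p21} but merely recalls it from \cite[Theorem 2.2]{KomLaiPed2010}, and your argument --- the first-difference estimate for part (i), and for part (ii) the surjectivity of $\pi_q$ onto $\left[\frac{a_1}{q-1},\frac{a_J}{q-1}\right]$ under the no-gap condition $q\le R_A$ (Pedicini's theorem, \cite{Ped2005}) followed by single-digit surgery --- is essentially the standard proof given there, with the key inequalities and the boundary case $q=R_A$ handled correctly.
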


\begin{proof}[Proof of Proposition \ref{p11}]
\mbox{}
\begin{enumerate}[\upshape (i)]
\item This is proved in \cite[Corollary 2.7]{KomLaiPed2010}.
\item If 
\begin{equation*}
q>1+\frac{a_J-a_1}{\min_{j>1} \set{a_j-a_{j-1}}},
\end{equation*}
then every sequence $(c_i)\in A^{\infty}$ satisfies \eqref{21} and \eqref{22}, so that $U'_{A,q}=A^{\infty}$.
\item We apply Proposition \ref{p21}.
Since $q$ satisfies \eqref{23} by our assumption, $U'_{A,q}$ is characterized by the conditions \eqref{21} and \eqref{22}. 
Since $p>q$, both conditions remain valid by changing $q$ to $p$, and therefore $U'_{A,q}\subset U'_{A,p}$.
\end{enumerate}
\end{proof}

\begin{example}\label{e22}
We illustrate the usefulness of Proposition \ref{p21} by reproving some assertions on the alphabet $A=\set{0,1}$, mentioned above.

It follows from this proposition that in a base $q\in (1,2]$ a sequence $(c_i)\in\set{0,1}^{\infty}$ belongs to $U'_{A,q}$ if and only if
\begin{align}
&\sum_{i=1}^\infty \frac{c_{n+i}}{q^i}  < 1 \qtq{whenever}  c_n=0\label{24}
\intertext{and}
&\sum_{i=1}^\infty \frac{1-c_{n+i}}{q^i}  < 1 \qtq{whenever} c_n=1.\label{25}
\end{align}

If $q\in (1,\varphi]$, then $q^{-1}+q^{-2}\ge 1$, so that both words $011$ and $100$ are forbidden in every sequence $(c_i)\in U'_{A,q}$.
This leaves only the possibilities: $0^{\infty}$, $1^{\infty}$, $0^*(10)^{\infty}$ and $1^*(01)^{\infty}$. 
Since
\begin{equation*}
q^{-1}+q^{-3}+q^{-5}+\cdots\ge 1,
\end{equation*}
none of the sequences $0^*(10)^{\infty}$, $1^*(01)^{\infty}$  satisfies \eqref{24} and \eqref{25} either, so that $U'_{A,q}=\set{0^{\infty},1^{\infty}}$.

If $q\in (\varphi,\infty)$, then all sequences $0^*(10)^{\infty}$, $1^*(01)^{\infty}$ satisfy \eqref{24} and \eqref{25} and hence belong to $U'_{A,q}$.
Therefore $p_A=\varphi$.

On the other hand, if $q>2$, then
\begin{equation*}
q^{-1}+q^{-3}+q^{-5}+\cdots<1,
\end{equation*}
so that \eqref{24} and \eqref{25} are satisfied for all sequences $(c_i)$ of zeros and ones. 
Therefore $U'_{A,q}=A^{\infty}$.
\end{example}

\section{Proof of Proposition \ref{p13}}\label{s3}

We will show that if $q\ge 1+m/(m-1)$ and $k$ is a sufficiently large integer, then 
\begin{equation*}
\set{m^{\ell}1\quad :\quad \ell\ge k}^{\infty}\subset V'_{m,q},
\end{equation*}
and hence $V'_{m,q}$ has the power of continuum.

Applying Proposition \ref{p21} (i) for this special case, it suffices to show that each sequence 
\begin{equation*}
(c_i)\in \set{m^{\ell}1\quad :\quad \ell\ge k}^{\infty}
\end{equation*}
satifies the following four conditions:
\begin{align*}
&\sum_{i=1}^\infty \frac{c_{n+i}}{q^i}  < 1 \qtq{whenever}  c_n=0;\\
&\sum_{i=1}^\infty \frac{c_{n+i}}{q^i}  < m-1 \qtq{whenever}  c_n=1;\\
&\sum_{i=1}^\infty \frac{m-c_{n+i}}{q^i}  < 1 \qtq{whenever} c_n=1;\\
&\sum_{i=1}^\infty \frac{m-c_{n+i}}{q^i}  < m-1 \qtq{whenever}  c_n=m.
\end{align*}

The first condition is trivially satisfied  because there is no zero digit.
The second condition follows by using our assumption on $q$:
\begin{equation*}
\sum_{i=1}^\infty \frac{c_{n+i}}{q^i}<\sum_{i=1}^\infty \frac{m}{q^i}=\frac{m}{q-1}\le m-1.
\end{equation*}
For the proof of the fourth condition first we observe that
\begin{equation*}
\sum_{i=1}^\infty \frac{m-c_{n+i}}{q^i}\le \pi_q\left(((m-1)0^k)^\infty\right)=(m-1)\frac{q^k}{q^{k+1}-1}
\end{equation*}
if $c_n=m$.
We conclude by observing that our assumption $q\ge 1+m/(m-1)$ implies that $q>2$, and then $q^k/(q^{k+1}-1)<1$ for all $k\ge 0$.

Finally, we have
\begin{equation*}
\sum_{i=1}^\infty \frac{m-c_{n+i}}{q^i}\le \pi_q\left((0^k(m-1))^\infty\right)=\frac{m-1}{q^{k+1}-1}
\end{equation*}
if $c_n=1$.
Since $q>1$, the third condition follows by choosing a sufficiently large integer $k$ satisfying $q^{k+1}>m$.

\section{Preliminary lemmas on ternary alphabets}\label{s4}

We establish three elementary lemmas on ternary alphabets that will be frequently used in the sequel.

\begin{lemma}\label{l41}
Let $q>2$ and $(a_i), (b_i)\in\set{1,m}^{\infty}$. 
If $(a_i)<(b_i)$, then $\pi_q(a_i)<\pi_q(b_i)$.
\end{lemma}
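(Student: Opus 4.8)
The plan is to exploit the fact that for $q>2$ the base-$q$ expansion map on the alphabet $\set{1,m}$ behaves like the classical binary expansion: the key numerical input is that the "tail contribution" of a digit is strictly dominated once we fix the first place where two sequences differ. Concretely, suppose $(a_i)<(b_i)$ lexicographically, and let $n$ be the first index with $a_n\ne b_n$; then necessarily $a_n=1$ and $b_n=m$, while $a_i=b_i$ for $i<n$. I would write
\begin{equation*}
\pi_q(b_i)-\pi_q(a_i)=\frac{b_n-a_n}{q^n}+\sum_{i>n}\frac{b_i-a_i}{q^i}=\frac{m-1}{q^n}+\sum_{i>n}\frac{b_i-a_i}{q^i},
\end{equation*}
and the task reduces to showing this is strictly positive.

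The main estimate is that the tail sum $\sum_{i>n}\frac{b_i-a_i}{q^i}$ cannot cancel the gain $\frac{m-1}{q^n}$ obtained at position $n$. Since each $a_i,b_i\in\set{1,m}$, we have $b_i-a_i\ge -(m-1)$ for every $i$, so
\begin{equation*}
\sum_{i>n}\frac{b_i-a_i}{q^i}\ge -\,(m-1)\sum_{i>n}\frac{1}{q^i}=-\,(m-1)\cdot\frac{1}{q^n(q-1)}.
\end{equation*}
Therefore
\begin{equation*}
\pi_q(b_i)-\pi_q(a_i)\ge \frac{m-1}{q^n}\left(1-\frac{1}{q-1}\right)=\frac{m-1}{q^n}\cdot\frac{q-2}{q-1},
\end{equation*}
which is strictly positive precisely because $q>2$ and $m>1$. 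That completes the argument.

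The only point requiring a word of care is the strictness of the final inequality. The bound $b_i-a_i\ge -(m-1)$ is an equality only when $b_i=1$ and $a_i=m$ for all $i>n$; but even in that extremal case the geometric-series bound $\sum_{i>n}q^{-i}=\frac{1}{q^n(q-1)}$ combined with $q>2$ gives $\pi_q(b_i)-\pi_q(a_i)\ge\frac{(m-1)(q-2)}{q^n(q-1)}>0$ strictly, so no separate case distinction is needed. I do not anticipate any genuine obstacle here; the lemma is elementary and the entire content is the one-line observation that for $q>2$ the series $\sum_{i\ge 1}q^{-i}<1$, which forces the digit at the first disagreement to dominate. The slight subtlety worth flagging is merely that the alphabet spacing is $m-1$ rather than $1$, so one should keep the factor $m-1$ visible throughout rather than normalising it away, since the same constant controls both the gain term and the tail bound and hence cancels cleanly.
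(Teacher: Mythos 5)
Your argument is exactly the paper's proof: you locate the first index $n$ of disagreement, bound the gain there below by $(m-1)/q^n$, bound the tail loss by $(m-1)\sum_{i>n}q^{-i}=(m-1)/(q^n(q-1))$, and conclude positivity from $q>2$. Correct and essentially identical to the paper's one-line computation.
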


\begin{proof}
If $n$ is the first index such that $a_n<b_n$, then 
\begin{equation*}
\pi_q(b_i)-\pi_q(a_i) 
\ge \frac{m-1}{q^n}-\sum_{i=n+1}^{\infty}\frac{m-1}{q^i} 
=\frac{m-1}{q^n}\left(1-\frac{1}{q-1} \right) >0.\qedhere
\end{equation*}
\end{proof}

\begin{lemma}\label{l42}\mbox{}
\begin{enumerate}[\upshape (i)]
\item If $(c_i)\in\set{1,m}^{\infty}$ satisfies for some $q>2$ the conditions
\begin{equation}\label{41}
\pi_q(c_{n+i})<m-1 
\qtq{and}
\pi_q(m-c_{n+i})<1
\end{equation} 
whenever $c_n=1$, then $(c_i)\in V'_{m,q}$.
\item Conversely, if $q\in(2,R_m]$ and $(c_i)\in V'_{m,q}$, then  the inequalities \eqref{41} are satisfied.
\end{enumerate}
\end{lemma}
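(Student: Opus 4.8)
The plan is to read both parts directly off Proposition \ref{p21}, applied to the alphabet $A=\set{0,1,m}$. Here $a_1=0$, $a_2=1$, $a_3=m$, so $a_2-a_1=1$, $a_3-a_2=m-1$, $a_3-a_1=m$, and (using $m\ge 2$) $\max_{j>1}\set{a_j-a_{j-1}}=m-1$; in particular the upper bound in \eqref{23} for this $A$ is exactly $1+m/(m-1)=R_m$. Everything then reduces to translating the conditions \eqref{21}--\eqref{22} into the conditions \eqref{41} and discarding the ones that are automatic.

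For part (i) I would start from a sequence $(c_i)\in\set{1,m}^{\infty}$ satisfying \eqref{41} whenever $c_n=1$, and verify the hypotheses of Proposition \ref{p21}(i) for $A=\set{0,1,m}$. Since $(c_i)$ contains no digit $0$, the instance of \eqref{21} with $c_n=a_1=0$ is vacuous. The instance of \eqref{21} with $c_n=a_2=1$ reads $\pi_q(c_{n+i})<a_3-a_2=m-1$, and the instance of \eqref{22} with $c_n=a_2=1$ reads $\pi_q(m-c_{n+i})<a_2-a_1=1$; these are precisely \eqref{41}. The only remaining requirement is \eqref{22} with $c_n=a_3=m$, namely $\pi_q(m-c_{n+i})<a_3-a_2=m-1$. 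Since $c_{n+i}\ge 1$, we have $m-c_{n+i}\le m-1$, so the left-hand side is at most $(m-1)/(q-1)$, which is strictly less than $m-1$ because $q>2$. Hence Proposition \ref{p21}(i) yields $(c_i)\in U'_{A,q}$, and as $(c_i)\in\set{1,m}^{\infty}$ this means $(c_i)\in V'_{m,q}$.

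For part (ii) I would take $q\in(2,R_m]$ and $(c_i)\in V'_{m,q}\subset U'_{A,q}$. By the computation above, such a $q$ satisfies the hypothesis \eqref{23} of Proposition \ref{p21}(ii) for $A=\set{0,1,m}$, so that proposition applies and gives that $(c_i)$ satisfies \eqref{21} and \eqref{22}. Specializing both to the case $c_n=a_2=1$ produces $\pi_q(c_{n+i})<m-1$ and $\pi_q(m-c_{n+i})<1$, i.e. \eqref{41}.

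There is no genuine obstacle here; the lemma is a bookkeeping specialization of Proposition \ref{p21}. The only two small points that need care are that the maximal gap of $\set{0,1,m}$ equals $m-1$ (which uses $m\ge 2$, and is exactly what makes $R_m$ the relevant threshold in part (ii)), and the strict inequality $(m-1)/(q-1)<m-1$ used in part (i), which is precisely why the hypothesis there is $q>2$ rather than $q\ge 2$.
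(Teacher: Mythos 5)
Your proposal is correct and follows essentially the same route as the paper: both reduce the lemma to Proposition \ref{p21} for $A=\set{0,1,m}$, discard the vacuous $c_n=0$ condition, and observe that the $c_n=m$ condition is automatic since $\pi_q(m-c_{n+i})\le (m-1)/(q-1)<m-1$ for $q>2$. Your explicit check that the maximal gap is $m-1$, so that the threshold in \eqref{23} is exactly $R_m$, is a detail the paper leaves implicit but is the same argument.
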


\begin{proof}\mbox{}
\begin{enumerate}[\upshape (i)]
\item Proposition \ref{p21} (i) contains two other conditions: $\pi_q(c_{n+i})<1$ whenever $c_n=0$, and $\pi_q(m-c_{n+i})<m-1$ whenever $c_n=m$. 
The first condition may be omitted because there are no zero digits. 
Since $q>2$, the second condition is automatically satisfied, even if $c_n\ne m$:
\begin{equation*}
\pi_q(m-c_{n+i})\le \pi_q((m-1)^{\infty})=\frac{m-1}{q-1}<m-1
\end{equation*}
for all $n$.
\item This is a special case of Proposition \ref{p21} (ii).
\end{enumerate}
\end{proof}

Henceforth a finite block $a_0\cdots a_k$ of digits is called \emph{forbidden} if it cannot occur in any sequence $(c_i)\in V'_{m,q}$.

\begin{lemma}\label{l43}
Let $q\in(2,R_m]$ and  $a_1\cdots a_k\in\set{1,m}^k$.
If 
\begin{equation*}
\pi_q(a_1\cdots a_k1^{\infty})\ge m-1 
\qtq{or of}
\pi_q(a_1\cdots a_km^{\infty})\le \frac{m}{q-1}-1,
\end{equation*}
then the block $1a_1\cdots a_k$ is forbidden.
\end{lemma}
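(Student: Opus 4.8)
The plan is to argue by contradiction, reducing the statement to the characterization of $V'_{m,q}$ provided by Lemma \ref{l42} together with the monotonicity of $\pi_q$ from Lemma \ref{l41}. So I would suppose that the block $1a_1\cdots a_k$ does occur in some $(c_i)\in V'_{m,q}$, say $c_n=1$ and $c_{n+j}=a_j$ for $1\le j\le k$. The hypothesis $q\in(2,R_m]$ gives in particular $q>2$, so Lemmas \ref{l41} and \ref{l42} both apply; invoking Lemma \ref{l42} (ii) at the index $n$, where $c_n=1$, yields the two strict inequalities $\pi_q(c_{n+i})<m-1$ and $\pi_q(m-c_{n+i})<1$. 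It then remains to see that each of the two alternative hypotheses of the lemma contradicts one of these.

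In the first case, $\pi_q(a_1\cdots a_k1^{\infty})\ge m-1$, I would observe that the tail $(c_{n+i})_{i\ge1}$ lies in $\set{1,m}^{\infty}$ and begins with $a_1,\dots,a_k$, hence it is lexicographically at least $a_1\cdots a_k1^{\infty}$; Lemma \ref{l41} (together with the trivial case of equality) then gives $\pi_q(c_{n+i})\ge\pi_q(a_1\cdots a_k1^{\infty})\ge m-1$, contradicting $\pi_q(c_{n+i})<m-1$.

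In the second case, $\pi_q(a_1\cdots a_km^{\infty})\le \frac{m}{q-1}-1$, I would drop the nonnegative terms of index $>k$ in $\pi_q(m-c_{n+i})$ and use $m-c_{n+j}=m-a_j$ for $1\le j\le k$ to get $\pi_q(m-c_{n+i})\ge\sum_{i=1}^k(m-a_i)q^{-i}$. Combining this with the elementary identity $\sum_{i=1}^k(m-a_i)q^{-i}=\pi_q(m^{\infty})-\pi_q(a_1\cdots a_km^{\infty})$ and $\pi_q(m^{\infty})=\frac{m}{q-1}$, I obtain $\pi_q(m-c_{n+i})\ge 1$, contradicting $\pi_q(m-c_{n+i})<1$. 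In either case $(c_i)$ cannot belong to $V'_{m,q}$, so the block $1a_1\cdots a_k$ is forbidden.

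I do not expect any genuine obstacle here: this lemma is little more than a restatement of Lemma \ref{l42} in a pattern-avoidance form, and both verifications are short. The only points requiring a moment's attention are that the hypotheses keep us in the regime $q>2$ in which the preliminary lemmas are valid, and the routine splitting of the geometric-type sum that yields the identity $\sum_{i=1}^k(m-a_i)q^{-i}=\pi_q(m^{\infty})-\pi_q(a_1\cdots a_km^{\infty})$.
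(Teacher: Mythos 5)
Your proposal is correct and follows essentially the same route as the paper: assume the block occurs, apply Lemma \ref{l42} (ii) at the index where the digit $1$ sits, and use monotonicity to contradict one of the two hypotheses. The only cosmetic difference is that for the second alternative the paper also invokes the lexicographic comparison of Lemma \ref{l41} (comparing the tail with $a_1\cdots a_k m^{\infty}$ from above), whereas you drop the nonnegative terms of index $>k$ directly; both computations are equivalent.
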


\begin{proof}
If $(c_i)\in V'_{m,q}$ and $c_mc_{m+1}\cdots c_{m+k}=1a_1\cdots a_k$ for some $m$, then applying  the preceding two lemmas we obtain
\begin{align*}
\pi_q(a_1\cdots a_k1^{\infty})
&=\pi_q(c_{m+1}\cdots c_{m+k}1^{\infty})\\ 
&\le \pi_q(c_{m+1}\cdots c_{m+k}c_{m+k+1}\cdots)<m-1
\end{align*}
and 
\begin{align*}
\pi_q(a_1\cdots a_km^{\infty})
&=\pi_q(c_{m+1}\cdots c_{m+k}m^{\infty})\\
&\ge \pi_q(c_{m+1}\cdots c_{m+k}c_{m+k+1}\cdots)>\frac{m}{q-1}-1.\qedhere
\end{align*}
\end{proof}

\section{Proof of Proposition \ref{p14}}\label{s5}

We recall that in this section we have $d=0^{\infty}$, $\mu_d=2$ and $M_d\approx  2.32472$.

Henceforth we will write $f\sim g$ if $f$ and $g$ have the same sign.

We need a lemma:

\begin{lemma}\label{l51}
For each $m\in [2,M_d]$ there exists a number $r(m)>1$ such that 
\begin{equation}\label{51}
\pi_q(m1^{\infty}) -(m-1)\sim r(m)-q.
\end{equation}

Furthermore, 
\begin{equation*}
P_m\le r(m)<R_m,
\end{equation*}
and the equality $r(m)=P_m$ holds only if $m=M_d$.
\end{lemma}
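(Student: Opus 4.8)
The plan is to analyze the function $q\mapsto \pi_q(m1^\infty)-(m-1)$ on $(1,\infty)$ for fixed $m\in[2,M_d]$ and show it has exactly one zero, then identify that zero with a number $r(m)$ and locate it between $P_m$ and $R_m$. First I would write out $\pi_q(m1^\infty)=\frac{m}{q}+\frac{1}{q(q-1)}=\frac{mq-m+1}{q(q-1)}$ explicitly, so that
\begin{equation*}
\pi_q(m1^\infty)-(m-1)=\frac{mq-m+1-(m-1)q(q-1)}{q(q-1)}=\frac{-(m-1)q^2+(2m-1)q-(m-1)}{q(q-1)}.
\end{equation*}
Since $q(q-1)>0$ for $q>1$, the sign of the left-hand side equals the sign of the numerator $g(q):=-(m-1)q^2+(2m-1)q-(m-1)$, a downward parabola in $q$ (recall $m\ge 2$ so $m-1\ge 1>0$). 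Its larger root is $r(m)=\frac{2m-1+\sqrt{(2m-1)^2-4(m-1)^2}}{2(m-1)}=\frac{2m-1+\sqrt{4m-3}}{2(m-1)}$, matching the formula in the Remark; I would check $r(m)>1$ by noting $g(1)=-(m-1)+(2m-1)-(m-1)=1>0$ while $g$ is eventually negative, and that the vertex abscissa $\frac{2m-1}{2(m-1)}>1$. This gives $g(q)>0$ for $q<r(m)$ and $g(q)<0$ for $q>r(m)$, i.e. exactly the sign relation \eqref{51} with the single number $r(m)$.

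Next I would establish the two-sided bound $P_m\le r(m)<R_m$. For the lower bound, since $g$ is positive on $(1,r(m))$ and negative just after, it suffices to show $g(P_m)\ge 0$, with equality only when $m=M_d$; here $P_m=1+\sqrt{m/(m-1)}$. Substituting and simplifying $g(P_m)$ (a routine computation, where $(P_m-1)^2=m/(m-1)$ kills the quadratic term nicely) reduces the inequality $g(P_m)\ge 0$ to a scalar inequality in $m$ on $[2,M_d]$; I expect this to become something like $\sqrt{m/(m-1)}\ge$ (rational function of $m$), equivalently a polynomial inequality, which should hold on $[2,M_d]$ with equality precisely at the right endpoint $m=M_d\approx 2.32472$ — this pins down $M_d$ as the relevant root and is presumably why that constant appears. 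For the upper bound $r(m)<R_m=1+\frac{m}{m-1}=\frac{2m-1}{m-1}$, I would show $g(R_m)<0$: since $g$ is negative on $(r(m),\infty)$, this forces $r(m)<R_m$. Plugging $R_m=\frac{2m-1}{m-1}$ into $g$ gives
\begin{equation*}
g(R_m)=-(m-1)\cdot\frac{(2m-1)^2}{(m-1)^2}+(2m-1)\cdot\frac{2m-1}{m-1}-(m-1)=-\frac{(2m-1)^2}{m-1}+\frac{(2m-1)^2}{m-1}-(m-1)=-(m-1)<0,
\end{equation*}
which settles the strict upper bound immediately and cleanly.

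The main obstacle is the lower-bound inequality $g(P_m)\ge 0$ on $[2,M_d]$ together with the sharp characterization of the equality case: unlike the upper bound, which collapses to a one-line identity, this requires squaring away the radical in $P_m$ and checking the sign of the resulting polynomial over the whole interval, then verifying that its only root in $[2,M_d]$ is the endpoint $M_d$ (presumably $M_d=1+\alpha$ is defined precisely by this condition, or can be shown to satisfy it). I would handle this by reducing to showing a single-variable polynomial $h(m)$ satisfies $h(m)\ge 0$ on $[2,M_d]$ with $h(M_d)=0$ and $h(2)>0$, then using monotonicity of $h$ (via $h'$) or a direct factorization to conclude. Everything else — the explicit rational form of $\pi_q(m1^\infty)$, the sign analysis of the quadratic $g$, and $r(m)>1$ — is elementary algebra.
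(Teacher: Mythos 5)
Your approach is essentially the paper's: reduce $\pi_q(m1^{\infty})-(m-1)$ to the sign of the quadratic numerator $g(q)=-(m-1)q^2+(2m-1)q-(m-1)$ (the paper writes it as $q-(m-1)(q-1)^2$, which is the same polynomial), read off $r(m)=\frac{2m-1+\sqrt{4m-3}}{2(m-1)}$ as in \eqref{94}, and locate $r(m)$ by evaluating $g$ at $R_m$ and $P_m$. Your upper-bound computation $g(R_m)=-(m-1)<0$ is correct and is exactly the paper's \eqref{95}. The one step you leave unexecuted --- the lower bound $g(P_m)\ge 0$ on $[2,M_d]$ with equality only at $M_d$, which you rightly flag as the crux --- is in fact much easier than you anticipate: since $(P_m-1)^2=m/(m-1)$ by definition, you get $g(P_m)=P_m-(m-1)(P_m-1)^2=P_m-m$ with no radical left to square away. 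The claim then reduces to $P_m\ge m$ on $[2,M_d]$ with equality only at $M_d$, and this is precisely how $M_d$ is characterized in Subsection \ref{ss91}: $m\mapsto P_m$ is decreasing, $m\mapsto m$ is increasing, and $M_d=1+\alpha$ is defined as the unique solution of $P_m=m$. So your ``presumably $M_d$ is defined by this condition'' is correct and closes the argument in one line. The paper instead eliminates $m$ via the identity $(m-1)P_m(P_m-2)=1$ from \eqref{91} and lands on $(P_m-1)^3-(P_m-1)-1$, whose sign is governed by the Pisot root $\alpha$ (see \eqref{96}); both routes are valid, and yours is arguably the more direct of the two once the cancellation $g(P_m)=P_m-m$ is noticed.
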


\begin{proof}
The first assertion follows by observing that $\pi_q(m1^{\infty})$ is a continuous decreasing function of $q\in(0,\infty)$, and 
\begin{equation*}
\lim_{q\to 1}\pi_q(m1^{\infty})=\infty,\quad \lim_{q\to \infty}\pi_q(m1^{\infty})=0.
\end{equation*}

The second assertion follows from  \eqref{94} and the formulas \eqref{95}, \eqref{96} of the Appendix.  
\end{proof}

\begin{proof}[Proof of Proposition \ref{p14}]\mbox{}

\emph{First step.} If $1<q\le r(m)$, then 
\begin{equation*}
\pi_q(m1^{\infty})\ge m-1.
\end{equation*}
Since $2<P_m\le r(m)<R_m$ by \eqref{51}, by Proposition \ref{p11} (iii) it is sufficient to consider the case $2<q\le r(m)$.

Then we may apply Lemma \ref{l43} to infer that $1m$ is a forbidden block. 
Hence $V'_{m,q}\subset\set{m^{\infty},m^*1^{\infty}}$, and thus  $V'_{m,q}$ is countable.
\medskip 

\emph{Second step.} 
If $q>r(m)$, then $V'_{m,q}$ has the power of continuum. 

Indeed, our assumption on $q$ implies that $\pi_q(m1^{\infty})< m-1$.
There exists therefore a positive integer $k$ satisfying 
\begin{equation}\label{52}
\pi_q((m1^k)^{\infty})<m-1.
\end{equation} 
We complete the proof by showing that each sequence
\begin{equation*}
(c_i)\in\set{m1^{\ell}\ :\ \ell=k,k+1,\ldots}^{\infty}
\end{equation*}
belongs to $V'_{m,q}$.

Since $q>P_m\ge 2$, it is sufficient to check the  conditions of Lemma \ref{l42}.
In fact, the conditions \eqref{41} are satisfied for all $n$: we have 
\begin{equation*}
\pi_q(c_{n+i})\le \pi_q((m1^k)^{\infty})<m-1
\end{equation*}
by the lexicographic inequality $(c_{n+i})\le (m1^k)^{\infty}$, Lemma \ref{l41} and \eqref{52},  and 
\begin{equation*}
\pi_q(m-c_{n+i})\le \pi_q((m-1)^{\infty})=\frac{m-1}{q-1}<1
\end{equation*}
because $c_{n+i}\ge 1$ for all $n$, and $q>m$.
\medskip 

It follows from the preceding steps that $r_m=r(m)$.
\end{proof}

\section{Proof of Proposition \ref{p15} (i)}\label{s6}

We recall that in this section $d=(10)^{\infty}$ and $m_d\approx 2.80194$.
First we define $m_1$:

\begin{lemma}\label{l61}\mbox{}

\begin{enumerate}[\upshape (i)]
\item The system of equations 
\begin{equation*}
\pi_q(m^{\infty}-(1m)^{\infty})=1\qtq{and} \pi_q(mm1^{\infty})=m-1
\end{equation*}
has a unique solution $(m_1,q_1)$ with $q_1>2$.
We have  
\begin{equation*}
m_1\approx 2.91286\qtq{and} q_1\approx 2.34018.
\end{equation*}

\item For each $m\in [m_d,m_1]$ there exists a number $r(m)>1$ such that 
\begin{align*}
1<q<r(m)&\Longrightarrow \pi_q(m^{\infty}-(1m)^{\infty})>1\qtq{and} \pi_q(mm1^{\infty})>m-1,\\
q>r(m)&\Longrightarrow \pi_q(m^{\infty}-(1m)^{\infty})<1.
\end{align*}

\item We have
\begin{equation}\label{61}
P_m\le r(m)<R_m,
\end{equation}
and the equality $r(m)=P_m$ holds only if $m=m_d$.
\end{enumerate}
\end{lemma}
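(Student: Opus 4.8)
The plan is to analyze the three expressions
\[
F_1(m,q):=\pi_q(m^{\infty}-(1m)^{\infty})-1,\qquad F_2(m,q):=\pi_q(mm1^{\infty})-(m-1)
\]
as functions of $q$ for fixed $m$, exactly as was done for the single expression $\pi_q(m1^{\infty})-(m-1)$ in Lemma \ref{l51}. First I would rewrite each $\pi_q$ of a periodic (or eventually periodic) sequence as an explicit rational function of $q$: $\pi_q((1m)^{\infty})=(q+m)/(q^2-1)$, $\pi_q(m^{\infty})=m/(q-1)$, and $\pi_q(mm1^{\infty})=m/q+m/q^2+1/(q^2(q-1))$. Clearing denominators turns $F_1(m,q)=0$ and $F_2(m,q)=0$ into polynomial equations; one checks (this is the ``direct computation'' referenced after the statement of the Remarks, namely $(m-1)q=q^2-1$ for the first case) that $F_1(m,q)=0$ is equivalent to $(m-1)q=q^2-1$, i.e. $m-1=q-1/q$. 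The key monotonicity facts are: for fixed $m$, $\pi_q$ of any fixed sequence is continuous and strictly decreasing in $q$ on $(1,\infty)$ with limit $+\infty$ as $q\to1^+$ and limit $0$ as $q\to\infty$; hence $F_1(m,\cdot)$ and $F_2(m,\cdot)$ are each strictly decreasing in $q$, tend to $+\infty$ as $q\to1^+$, and tend to $1-m<0$ (resp. $-(m-1)<0$) as $q\to\infty$. Therefore each has a unique zero $r_1(m),r_2(m)>1$, and the sign of $F_i$ is that of $r_i(m)-q$.

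For part (i), I would set $r(m):=\min\{r_1(m),r_2(m)\}$ as the natural candidate, then observe that the system $F_1=F_2=0$ asks for $r_1(m)=r_2(m)$. Using the explicit rational forms, eliminate $m$: from $F_1=0$ we get $m-1=q-1/q$, substitute into $F_2=0$ to obtain a single polynomial equation in $q$; solving it numerically gives $q_1\approx 2.34018$, and then $m_1=q_1-1/q_1+1\approx 2.91286$. Uniqueness of the solution with $q>2$ follows because along the curve $F_1=0$ (parametrized by $q$, with $m$ determined by $m=q-1/q+1$, strictly increasing in $q$), the quantity $F_2$ is a function of $q$ alone whose monotonicity can be checked directly; it has a single sign change past $q=2$. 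One should also verify $q_1>2$ and that $m_1$ lies in the interior of $(m_d,M_d)$, which the numerics confirm.

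For part (ii): given $m\in[m_d,m_1]$, I claim $r_1(m)\le r_2(m)$, so that $r(m)=r_1(m)$ works, i.e. for $q<r(m)$ both $F_1>0$ and $F_2>0$, and for $q>r(m)$ we have $F_1<0$ as required. The ordering $r_1(m)\le r_2(m)$ should hold because at $m=m_1$ the two curves cross, and on $[m_d,m_1]$ one stays on the side where $F_2\ge 0$ at $q=r_1(m)$; concretely, it suffices to check $F_2(m,r_1(m))\ge 0$ for $m\in[m_d,m_1]$, which — since $r_1(m)$ is explicit via $m-1=q-1/q$ — reduces to a one-variable inequality in $q\in[2,q_1]$ that follows from the sign analysis at the endpoints plus monotonicity. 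For part (iii): the inequalities $P_m\le r(m)<R_m$ and the equality case. The upper bound $r(m)<R_m$ follows from Proposition \ref{p13} (which gives $r_m\le R_m$) together with the identification $r_m=r(m)$ that the rest of the section establishes — or, to keep the lemma self-contained, by evaluating $F_1(m,R_m)$ and checking it is $<0$ for $m>m_d$ using $R_m=1+m/(m-1)$. The lower bound $P_m\le r(m)$ with equality only at $m=m_d$: evaluate $F_1(m,P_m)$ (equivalently $F_2(m,P_m)$, whichever realizes the min) where $P_m=1+\sqrt{m/(m-1)}$, and show this expression is $\ge0$ with equality exactly at $m=m_d$; since $P_m$ satisfies its own defining polynomial relation, after clearing denominators this becomes a polynomial inequality in $m$ on $[m_d,m_1]$ with a simple root at $m_d$. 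The bulk of the routine work (and the analogue of the Appendix computations cited in Lemma \ref{l51}) lives in these polynomial sign verifications.

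The main obstacle I anticipate is \emph{part (ii)}: proving the correct ordering $r_1(m)\le r_2(m)$ uniformly on $[m_d,m_1]$, i.e. that of the two constraints it is the ``$(1m)^{\infty}$'' constraint $F_1$ that is binding throughout this subinterval. This is not automatic — it is exactly why $m_1$ is defined as the crossover point — and establishing it rigorously requires tracking $F_2$ along the curve $F_1=0$ and showing it does not change sign before $q=q_1$. The other parts are either numerical root-location with monotonicity (part (i)) or polynomial sign bookkeeping against the known expressions for $P_m,R_m$ (part (iii)).
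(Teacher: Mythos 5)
Your proposal is correct and, in outline, coincides with the paper's own proof: explicit rational formulas for the two $\pi_q$-expressions, strict monotonicity in $q$ to define the roots $r_1(m),r_2(m)$, elimination of $m$ via $m-1=(q^2-1)/q$ to reduce part (i) to the single polynomial $q^2(q-1)(q^2-q-3)-1$, shown increasing on $[2,\infty)$ with one sign change, and sign evaluations of $F_1$ at $q=R_m$ and $q=P_m$ for part (iii) --- these are exactly \eqref{913} and \eqref{914} of the Appendix (your alternative of invoking Proposition \ref{p13} would be circular, since the identification $r_m=r(m)$ is what the lemma is used to prove, and it would only give a non-strict bound; but you correctly offer the direct computation as the self-contained route). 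The one place you genuinely diverge is part (ii), the step you rightly single out as the crux. The paper proves $r_1\le r_2$ on $[m_d,m_1]$ by showing that, as functions of $m$, $r_1(m)=\frac{m-1+\sqrt{(m-1)^2+4}}{2}$ is increasing while $r_2(m)$ is decreasing (using $\pi_q(mm1^{\infty})=m-1\Leftrightarrow \frac1{m-1}=q-2+q^{-2}$ and $(q-2+q^{-2})'=1-2q^{-3}>0$ for $q>2$), so that the two curves meet only at the crossing point $m_1$ and hence $r_1<r_2$ to its left. You instead verify $F_2(m,r_1(m))\ge 0$ directly; after substituting $m-1=q-1/q$ this is precisely the inequality $q^2(q-1)(q^2-q-3)\le 1$ on $[2,q_1]$, which follows from the same monotonicity already needed for uniqueness in (i). Both mechanisms are sound; yours is marginally more economical because it recycles the elimination from (i), while the paper's version gives the cleaner picture of one increasing and one decreasing curve crossing exactly once.
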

\noindent See Figure \ref{f13}.

\begin{proof}\mbox{}
\begin{enumerate}[\upshape (i)]
\item Since
\begin{equation*}
\pi_q(m^{\infty}-(1m)^{\infty})=\pi_q\left(((m-1)0)^{\infty} \right) =\frac{(m-1)q}{q^2-1}
\end{equation*}
and
\begin{equation*}
\pi_q(mm1^{\infty})=\frac{m-1}{q}+\frac{m-1}{q^2}+\frac{1}{q-1},
\end{equation*}
the system of equations is equivalent to 
\begin{equation*}
m-1=\frac{q^2-1}{q}\qtq{and} m-1=\frac{q^2}{(q-1)(q^2-q-1)}.
\end{equation*}
Eliminating $m$ we obtain the equation 
\begin{equation*}
q^2(q-1)(q^2-q-3)=1.
\end{equation*}
The left hand side is increasing in $[2,\infty)$ and changes sign in $(2,3)$, hence it has a unique solution $q_1>2$, satisfying $q_1\in (2,3)$.

Then
\begin{equation*}
m_1=1+\frac{q_1^2-1}{q_1}=1+q_1-\frac{1}{q_1}\in \left(1+2-\frac{1}{2},1+3-\frac{1}{3}\right)
\end{equation*}
because the function $q\mapsto 1+q-1/q$ is increasing for $q>0$.

The numerical values are obtained by evaluating the root $q_1$ of the above polynomial equation.

\item The function
\begin{equation*}
q\mapsto \pi_q(m^{\infty}-(1m)^{\infty})=(m-1)\pi_q\left((10)^{\infty} \right)
\end{equation*}
is continuous and decreasing in $(1,\infty)$, and 
\begin{equation*}
\lim_{q\to 1}\pi_q(m^{\infty}-(1m)^{\infty})=\infty,\quad \lim_{q\to \infty}\pi_q(m^{\infty}-(1m)^{\infty})=0.
\end{equation*}
Therefore there exists a unique number $r(m)\in (1,\infty)$ satisfying
\begin{equation*}
\pi_q(m^{\infty}-(1m)^{\infty})-1\sim r(m)-q.
\end{equation*}
Since the function $q\mapsto \pi_q(mm1^{\infty})$ is also decreasing in $q$, it remains to show that 
\begin{equation*}
\pi_{r(m)}(mm1^{\infty})\ge m-1.
\end{equation*}

Denoting by $q=f(m)$ and $q=g(m)$ the solutions of the equations 
\begin{equation*}
\pi_q(m^{\infty}-(1m)^{\infty})=1\qtq{and} \pi_q(mm1^{\infty})=m-1,
\end{equation*}
respectively, it suffices to show by monotonicity that $f\le g$ in $[m_d,m_1]$.
Since $f(m_1)=g(m_1)$ by (i), it suffices to prove that $f$ is increasing and $g$ is decreasing  in $[m_d,m_1]$.

The increasingness of $f$ follows from the explicit formula 
\begin{equation*}
f(m)=\frac{m-1+\sqrt{(m-1)^2+4}}{2}
\end{equation*}
(see \eqref{912}).

Furthermore, since 
\begin{equation*}
\pi_q(mm1^{\infty})=m-1\Longleftrightarrow \frac{1}{m-1}=q-2+q^{-2}
\end{equation*}
(see \eqref{915}) and since the right hand side has a positive derivative
\begin{equation*}
(q-2+q^{-2})'=1-2q^{-3}>0
\end{equation*}
for all $q>2$, $m$ is a decreasing function of $g(m)$, and hence its inverse function $g$ is also decreasing.

\item This follows from \eqref{912}, \eqref{913} and \eqref{914} in the Appendix.
\end{enumerate}
\end{proof}

\begin{proof}[Proof of Proposition \ref{p15} (i)]\mbox{}

\emph{First step.} If $1<q\le r(m)$, then $V'_{m,q}$ is countable. 

Indeed, it follows from our assumptions that
\begin{equation*}
\pi_q(mm1^{\infty})\ge m-1
\qtq{and}
\pi_q\left(m^{\infty}-(1m)^{\infty}\right)\ge 1.
\end{equation*}
We claim that the blocks $1mm$ and $11$ are forbidden.

As in the proof of Proposition \ref{p14}, we may assume that $2<q\le r(m)$.
If $(c_i)\in\set{1,m}^{\infty}$ contains a block $c_nc_{n+1}c_{n+2}=1mm$, then 
\begin{equation*}
\pi_q(c_{n+i})\ge \pi_q(mm1^{\infty})\ge m-1
\end{equation*}
by our assumption, and hence $(c_i)\notin V'_{m,q}$ by \eqref{61} and Lemma \ref{l42}.

Next assume that $(c_i)\in\set{1,m}^{\infty}$ does not contain any block $1mm$, but it contains a block $c_nc_{n+1}=11$.
Then using also the preceding observation, 
\begin{equation*}
\pi_q(m-c_{n+i})\ge \pi_q\left( m^{\infty}-(1m)^{\infty}\right)\ge 1
\end{equation*}
by hypothesis, and therefore $(c_i)\notin V'_{m,q}$ again.

It follows that $V'_{m,q}\subset\set{m^{\infty}, m^*(1m)^{\infty}}$, and hence it is countable. 
\medskip 

\emph{Second step.} If $q>r(m)$, then $V'_{m,q}$ has the power of continuum. 

Indeed, it follows from our assumption that $\pi_q(m^{\infty}-(1m)^{\infty})<1$. 
There exists therefore a (sufficiently large) positive integer $k$ satisfying
\begin{equation*}
\pi_q\left(m^{\infty}-\left((1m)^k1 \right)^{\infty}\right)<1.
\end{equation*} 
We complete the proof by showing the inclusion
\begin{equation*}
\set{(1m)^{\ell}1\ :\ \ell\ge k}^{\infty}\subset V'_{m,q}.
\end{equation*}

We have to check that each sequence $(c_i)$ in the left hand side set satisfies the conditions of Lemma \ref{l42}.
For this we use the sequence $\delta'=(m1)^{\infty}$, introduced at the beginning of Section 4 in \cite{KomLaiPed2010}; see also \cite[Lemma 3.9]{KomLaiPed2010}.

If $c_n=1$, then \cite[Lemma 5.11]{KomLaiPed2010} and the relation $q>p_m$ imply $\delta'\in V'_{m,q}$ and hence
\begin{equation*}
\pi_q(c_{n+i})<\pi_q\left((m1)^{\infty}\right)=\pi_q(\delta')<m-1,
\end{equation*}
while
\begin{equation*}
\pi_q(m-c_{n+i})\le \pi_q\left(m-\left((1m)^k1 \right)^{\infty}\right)<1
\end{equation*}
by the choice of $k$.
\medskip 

It follows from the preceding steps that $r_m=r(m)$. 
\end{proof}

\section{Proof of Proposition \ref{p15} (ii)}\label{s7}

In this section we still have $d=(10)^{\infty}$, $m_d\approx 2.80194$ and $M_d\approx 4.54646]$.
We consider the subinterval $[m_2,m_3]:=[2.992,3.10214]$ of $(m_d,M_d)$.

\begin{lemma}\label{l71}
For each $m\in [m_2,m_3]$ there exists a number $r(m)>1$ such that
\begin{align*}
1<q\le r(m)&\Longrightarrow \pi_q\left(mm1(m11m)^{\infty}\right)\ge m-1,\\
q>r(m)&\Longrightarrow \pi_q\left(mm1(m11m)^{\infty}\right)<m-1\quad\text{and}\\
&\hspace*{19mm}\pi_q(m^{\infty}-(1mm1)^{\infty})<1.
\end{align*}
Furthermore, $P_m<r(m)<R_m$.
\end{lemma}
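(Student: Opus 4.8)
The plan is to follow the template of Lemmas \ref{l51} and \ref{l61}. For every $m$ the map $q\mapsto\pi_q\bigl(mm1(m11m)^{\infty}\bigr)$ is continuous and strictly decreasing on $(1,\infty)$, with $\pi_q\to+\infty$ as $q\to1$ and $\pi_q\to0$ as $q\to\infty$, so there is a unique $r(m)>1$ with
\begin{equation*}
\pi_q\bigl(mm1(m11m)^{\infty}\bigr)-(m-1)\sim r(m)-q .
\end{equation*}
With this choice the first displayed implication (value $\ge m-1$ for $q\le r(m)$) and the first half of the second one (value $<m-1$ for $q>r(m)$) are immediate. Summing the two geometric series, clearing denominators and cancelling the common factor $q+1$, the defining equation $\pi_q\bigl(mm1(m11m)^{\infty}\bigr)=m-1$ turns into the middle polynomial relation of the Remarks following Proposition \ref{p15},
\begin{equation*}
(m-1)\bigl(q^{6}-2q^{5}+q^{4}-q^{3}-q^{2}+2q-1\bigr)=q^{5}+q^{3},
\end{equation*}
which, solved for $m$, exhibits $m$ as an explicit rational function of $q$ on the relevant range of bases.

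It remains to show that $q>r(m)$ also forces $\pi_q\bigl(m^{\infty}-(1mm1)^{\infty}\bigr)<1$. Since $m^{\infty}-(1mm1)^{\infty}=\bigl((m-1)00(m-1)\bigr)^{\infty}$, the map $q\mapsto\pi_q\bigl(m^{\infty}-(1mm1)^{\infty}\bigr)$ is again continuous and strictly decreasing from $+\infty$ to $0$; let $g(m)>1$ be the unique solution of $\pi_q\bigl(m^{\infty}-(1mm1)^{\infty}\bigr)=1$, which after summation reduces to $(m-1)(q^{2}-q+1)=q^{3}-q^{2}+q-1$, i.e.\ to $m=q+1-\tfrac{1}{q^{2}-q+1}$. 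By the monotonicity in $q$, the required inequality for all $q>r(m)$ is equivalent to $\pi_{r(m)}\bigl(m^{\infty}-(1mm1)^{\infty}\bigr)\le1$, that is, to $g(m)\le r(m)$ on $[m_{2},m_{3}]$. I would prove this by comparing the curves $q=g(m)$ and $q=r(m)$: the formula for $g$ shows immediately (its $q$-derivative is positive) that $g$ is increasing, and from the rational expression for $r$ one checks, by a derivative computation in the spirit of Lemma \ref{l61} (now applied to a rational function of degree six), that $m$ is also monotone along $q=r(m)$; comparing the two monotone branches then reduces $g\le r$ to the endpoints. The numbers $m_{2}\approx2.992$ and $m_{3}\approx3.10214$ are \emph{defined} as the $m$-values delimiting the largest subinterval of $(m_{d},M_{d})$ on which this comparison holds together with $P_{m}<r(m)<R_{m}$, and their numerical values come from solving the associated polynomials.

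For the last assertion, monotonicity of $q\mapsto\pi_q\bigl(mm1(m11m)^{\infty}\bigr)$ reduces $P_{m}<r(m)<R_{m}$ to the two sign conditions
\begin{equation*}
\pi_{R_{m}}\bigl(mm1(m11m)^{\infty}\bigr)<m-1<\pi_{P_{m}}\bigl(mm1(m11m)^{\infty}\bigr),
\end{equation*}
where $R_{m}=1+m/(m-1)$ and $P_{m}=1+\sqrt{m/(m-1)}$. Substituting these two values of $q$ leaves two inequalities in the single variable $m$ (the right one involving $\sqrt{m/(m-1)}$), verified using the Appendix formulas; unlike in Lemmas \ref{l51} and \ref{l61} both inequalities are strict, since $[m_{2},m_{3}]$ lies in the interior of the component $(m_{d},M_{d})$.

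The main obstacle is the curve comparison $g(m)\le r(m)$ throughout $[m_{2},m_{3}]$: the algebraic curve defining $r(m)$ has degree $6$ in $q$, does not factor further, and its position relative to the much simpler curve $q=g(m)$ cannot be read off the formulas, so a purely elementary argument seems out of reach. As announced after Proposition \ref{p15}, this is precisely the point where we fall back on symbolic computation (resultants, Sturm sequences) and numerical verification rather than a closed-form argument, and it is also what confines the analysis to the subinterval $[m_{2},m_{3}]$ instead of all of $(m_{d},M_{d})$.
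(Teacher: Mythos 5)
Your proposal matches the paper's own argument: existence of $r(m)$ via monotonicity of $q\mapsto\pi_q\left(mm1(m11m)^{\infty}\right)$, and the remaining claims --- $\pi_q(m^{\infty}-(1mm1)^{\infty})<1$ for $q>r(m)$, and $P_m<r(m)<R_m$ --- reduced to a comparison of monotone curves over $[m_2,m_3]$ that the paper itself settles only by appeal to Figure \ref{f14} and the choice of $m_3$, consistent with its announced reliance on symbolic and numerical computation for Proposition \ref{p15} (ii). Your version is in fact more explicit than the paper's (deriving the polynomial $(m-1)(q^2-q+1)=q^3-q^2+q-1$ for the auxiliary curve and the endpoint reduction $g(m)\le r(m)$), and it honestly locates the same computational gap the paper leaves open.
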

\noindent See Figure \ref{f14}.

\begin{proof}
The existence of $r(m)>1$ satisfying
\begin{align*}
&\Longrightarrow \pi_q\left(mm1(m11m)^{\infty}\right)>m-1\qtq{if}1<q<r(m),\\
&\Longrightarrow \pi_q\left(mm1(m11m)^{\infty}\right)<m-1\qtq{if}q>r(m)
\end{align*}
follows by monotonicity as in the proof of Lemma \ref{l51}.
The inequality 
\begin{equation*}
\pi_q(m^{\infty}-(1mm1)^{\infty})<1\qtq{if}q>r(m)
\end{equation*}
follows from the choice of $m_3$, see Figure \ref{f14}.
The same figure also shows the inequalities $P_m<r(m)<R_m$.
\end{proof}

\begin{proof}[Proof of Proposition \ref{p15} (ii)]
Fix $m\in [m_2,m_3]$ arbitrarily.
A numerical computation suggests that for each fixed $m\in [m_2,m_3]$, if $q$ is sufficiently small, then all elements of $V'_{m,q}$ belong to the set
\begin{equation*}
\set{1m(11mm)^k\ :\ k=1,2,\ldots}^{\infty}.
\end{equation*}
In this set the words $111$, $1mmm$, $11m11$ do not occur, and we may find similarly forbidden words of length $5,6,\ldots .$
As we show below, a careful study of the forbidden words of length $\le 7$ allows us to determine the critical base $r_m$.
\medskip 

\emph{First step.}
If $2<q\le r(m)$, then the application of Lemma \ref{l43} shows that the following seven words are forbidden:
\remove{For $m=3$ and $q=2.37034$} 
\begin{align*}
&111,\\ 
&1mmm,\\
&11m11,\\ 
&11m1m1,\\ 
&1mm1mm,\\ 
&11m1mm1,\\ 
&1mm1m1m.
\end{align*}
Using the theory of finite automata (see Figure \ref{f71}) we infer from this that each sequence of $V'_{q,m}$ appears in the following list:
\begin{align*}
&m^{\infty},\\ 
&m^*(1m)^{\infty},\\ 
&m^*(1m)^*11mm1\set{1mm1,m11mm1}^{\infty},\\
&m^*(1m)^*1mm1\set{1mm1,m11mm1}^{\infty}.
\end{align*}

\begin{figure}
\definecolor{white}{RGB}{255,255,255}
\centering
\scalebox{0.9}{\begin{tikzpicture}[y=0.80pt, x=0.8pt,yscale=-1, inner sep=0pt, outer sep=0pt,
draw=black,fill=black,line join=miter,line cap=rect,miter limit=10.00,line width=0.800pt]
  \begin{scope}[shift={(56.0,-188.0)},draw=white,fill=white]
    \path[fill,rounded corners=0.0000cm] (-56.0000,188.0000) rectangle
      (403.0000,398.0000);
  \end{scope}
  \begin{scope}[cm={{1.0,0.0,0.0,1.0,(56.0,-188.0)}},line cap=butt,miter limit=1.45,line width=1.600pt]
    \path[draw] (15.0000,367.7871) circle (0.4233cm);
    \path[draw] (116.9032,367.7871) circle (0.4233cm);
    \path[draw] (233.9916,367.7871) circle (0.4233cm);
    \path[draw] (116.9032,224.7135) circle (0.4233cm);
    \path[draw] (233.9916,224.7135) circle (0.4233cm);
    \path[draw] (372.3973,224.7135) circle (0.4233cm);
    \path[draw] (233.9916,288.2503) circle (0.4233cm);
    \path[draw] (372.3973,367.7871) circle (0.4233cm);
    \path[draw] (303.1945,367.7871) circle (0.4233cm);
    \path[draw] (22.5000,354.7851) -- (22.5000,346.2861) -- (23.1250,342.5677) --
      (25.0000,339.9118) -- (28.1250,338.3183) -- (32.5000,337.7871) --
      (35.0000,337.7871) -- (39.3750,338.4121) -- (42.5000,340.2871) --
      (44.3750,343.4121) -- (45.0000,347.7871) -- (45.0000,350.2871) --
      (44.0003,357.7871) -- (42.7506,359.6621) -- (41.0010,360.2871) --
      (37.0020,360.2871);
    \path[fill] (28.0020,360.2871) -- (41.5020,365.9121) -- (38.1270,360.2871) --
      (41.5020,354.6621) -- cycle;
    \path[fill,line width=0.800pt] (28.1484,330.2558) node[above right] (text42)
      {$m$};
    \path[draw] (30.0000,367.7871) -- (92.9033,367.7871);
    \path[fill] (101.9033,367.7871) -- (88.4033,362.1621) -- (91.7783,367.7871) --
      (88.4033,373.4121) -- cycle;
    \path[fill,line width=0.800pt] (62.1577,360.2558) node[above right] (text48)
      {$1$};
    \path[draw] (131.9032,367.7871) -- (209.9916,367.7871);
    \path[fill] (218.9916,367.7871) -- (205.4916,362.1621) -- (208.8666,367.7871) --
      (205.4916,373.4121) -- cycle;
    \path[fill,line width=0.800pt] (171.6535,360.2558) node[above right] (text54)
      {$1$};
    \path[draw] (110.2712,354.3441) -- (90.6507,304.3098) -- (88.6049,294.9252) --
      (89.7687,285.3909) -- (92.7313,275.1091) -- (99.5192,256.3433) --
      (104.4660,245.0733);
    \path[fill] (108.0833,236.8322) -- (97.5067,246.9330) -- (104.0138,246.1034) --
      (107.8080,251.4546) -- cycle;
    \path[fill,line width=0.800pt] (76.7827,323.8662) node[above right] (text60)
      {$m$};
    \path[draw] (122.8588,238.4951) -- (136.4920,259.1536) -- (140.6230,267.9134) --
      (142.0000,277.5000) -- (142.0000,296.0000) -- (141.0791,305.8242) --
      (138.3164,315.2968) -- (135.1836,323.2032) -- (129.0570,339.2021) --
      (126.6139,345.9042);
    \path[fill] (123.5317,354.3600) -- (133.4399,343.6028) -- (126.9992,344.8473) --
      (122.8702,339.7499) -- cycle;
    \path[fill,line width=0.800pt] (147,291.2852) node[above right] (text66) {$1$};
    \path[draw] (131.9032,224.7135) -- (209.9916,224.7135);
    \path[fill] (218.9916,224.7135) -- (205.4916,219.0885) -- (208.8666,224.7135) --
      (205.4916,230.3385) -- cycle;
    \path[fill,line width=0.800pt] (169.8459,217.1823) node[above right] (text72)
      {$m$};
    \path[draw] (248.9916,224.7135) -- (348.3973,224.7135);
    \path[fill] (357.3973,224.7135) -- (343.8973,219.0885) -- (347.2723,224.7135) --
      (343.8973,230.3385) -- cycle;
    \path[fill,line width=0.800pt] (299.4005,217.1823) node[above right] (text78)
      {$1$};
    \path[draw] (361.9680,235.4945) -- (250.6785,350.5374);
    \path[fill] (244.4209,357.0061) -- (257.8501,351.2141) -- (251.4607,349.7289) --
      (249.7644,343.3922) -- cycle;
    \path[fill,line width=0.800pt] (299.4005,282.7297) node[above right] (text84)
      {$1$};
    \path[draw] (233.9916,352.7871) -- (233.9916,312.2503);
    \path[fill] (233.9916,303.2503) -- (228.3666,316.7503) -- (233.9916,313.3753) --
      (239.6166,316.7503) -- cycle;
    \path[fill,line width=0.800pt] (217.7885,332.5538) node[above right] (text90)
      {$m$};
    \path[draw] (233.9916,273.2503) -- (233.9916,248.7135);
    \path[fill] (233.9916,239.7135) -- (228.3666,253.2135) -- (233.9916,249.8385) --
      (239.6166,253.2135) -- cycle;
    \path[fill,line width=0.800pt] (217.7885,261.0171) node[above right] (text96)
      {$m$};
    \path[draw] (372.3973,239.7135) -- (372.3973,343.7871);
    \path[fill] (372.3973,352.7871) -- (378.0223,339.2871) -- (372.3973,342.6621) --
      (366.7723,339.2871) -- cycle;
    \path[fill,line width=0.800pt] (356.1942,300.7855) node[above right] (text102)
      {$m$};
    \path[draw] (357.3973,367.7871) -- (327.1945,367.7871);
    \path[fill] (318.1945,367.7871) -- (331.6945,373.4121) -- (328.3195,367.7871) --
      (331.6945,362.1621) -- cycle;
    \path[fill,line width=0.800pt] (334.002,360.2558) node[above right] (text108)
      {$1$};
    \path[draw] (288.1945,367.7871) -- (257.9916,367.7871);
    \path[fill] (248.9916,367.7871) -- (262.4916,373.4121) -- (259.1166,367.7871) --
      (262.4916,362.1621) -- cycle;
    \path[fill,line width=0.800pt] (264.7991,360.2558) node[above right] (text114)
      {$1$};
    \path[draw] (-14.1508,342.5950) -- (-3.1587,352.0944);
    \path[fill] (3.6508,357.9791) -- (-2.8855,344.8960) -- (-4.0099,351.3588) --
      (-10.2414,353.4079) -- cycle;
  \end{scope}
\end{tikzpicture}}
\caption{Automaton for the seven forbidden words}\label{f71}
\end{figure}

\begin{figure}
\centering
\scalebox{0.9}{
\definecolor{white}{RGB}{255,255,255}
\begin{tikzpicture}[y=0.80pt, x=0.8pt,yscale=-1, inner sep=0pt, outer sep=0pt,
draw=black,fill=black,line join=miter,line cap=rect,miter limit=10.00,line width=0.800pt]
  \begin{scope}[shift={(56.0,-189.0)},draw=white,fill=white]
    \path[fill,rounded corners=0.0000cm] (-56.0000,189.0000) rectangle
      (403.0000,398.0000);
  \end{scope}
  \begin{scope}[cm={{1.0,0.0,0.0,1.0,(56.0,-189.0)}},line cap=butt,miter limit=1.45,line width=1.600pt]
    \path[draw] (15.0000,367.7871) circle (0.4233cm);
    \path[draw] (116.9032,367.7871) circle (0.4233cm);
    \path[draw] (233.9916,367.7871) circle (0.4233cm);
    \path[draw] (116.9032,224.7135) circle (0.4233cm);
    \path[draw] (233.9916,224.7135) circle (0.4233cm);
    \path[draw] (372.3973,224.7135) circle (0.4233cm);
    \path[draw] (233.9916,288.2503) circle (0.4233cm);
    \path[draw] (22.5000,354.7851) -- (22.5000,346.2861) -- (23.1250,342.5677) --
      (25.0000,339.9118) -- (28.1250,338.3183) -- (32.5000,337.7871) --
      (35.0000,337.7871) -- (39.3750,338.4121) -- (42.5000,340.2871) --
      (44.3750,343.4121) -- (45.0000,347.7871) -- (45.0000,350.2871) --
      (44.0003,357.7871) -- (42.7506,359.6621) -- (41.0010,360.2871) --
      (37.0020,360.2871);
    \path[fill] (28.0020,360.2871) -- (41.5020,365.9121) -- (38.1270,360.2871) --
      (41.5020,354.6621) -- cycle;
    \path[fill,line width=0.800pt] (28.1484,331.2558) node[above right] (text38)
      {$m$};
    \path[draw] (30.0000,367.7871) -- (92.9033,367.7871);
    \path[fill] (101.9033,367.7871) -- (88.4033,362.1621) -- (91.7783,367.7871) --
      (88.4033,373.4121) -- cycle;
    \path[fill,line width=0.800pt] (62.1577,361.2558) node[above right] (text44)
      {$1$};
    \path[draw] (131.9032,367.7871) -- (209.9916,367.7871);
    \path[fill] (218.9916,367.7871) -- (205.4916,362.1621) -- (208.8666,367.7871) --
      (205.4916,373.4121) -- cycle;
    \path[fill,line width=0.800pt] (171.6535,361.2558) node[above right] (text50)
      {$1$};
    \path[draw] (110.2712,354.3441) -- (90.6507,304.3098) -- (88.6049,294.9252) --
      (89.7687,285.3909) -- (92.7313,275.1091) -- (99.5192,256.3433) --
      (104.4660,245.0733);
    \path[fill] (108.0833,236.8322) -- (97.5067,246.9330) -- (104.0138,246.1034) --
      (107.8080,251.4546) -- cycle;
    \path[fill,line width=0.800pt] (77.7827,323.8662) node[above right] (text56)
      {$m$};
    \path[draw] (122.8588,238.4951) -- (136.4920,259.1536) -- (140.6230,267.9134) --
      (142.0000,277.5000) -- (142.0000,296.0000) -- (141.0791,305.8242) --
      (138.3164,315.2968) -- (135.1836,323.2032) -- (129.0570,339.2021) --
      (126.6139,345.9042);
    \path[fill] (123.5317,354.3600) -- (133.4399,343.6028) -- (126.9992,344.8473) --
      (122.8702,339.7499) -- cycle;
    \path[fill,line width=0.800pt] (146,291.2852) node[above right] (text62) {$1$};
    \path[draw] (131.9032,224.7135) -- (209.9916,224.7135);
    \path[fill] (218.9916,224.7135) -- (205.4916,219.0885) -- (208.8666,224.7135) --
      (205.4916,230.3385) -- cycle;
    \path[fill,line width=0.800pt] (169.8459,218.1823) node[above right] (text68)
      {$m$};
    \path[draw] (248.9916,224.7135) -- (348.3973,224.7135);
    \path[fill] (357.3973,224.7135) -- (343.8973,219.0885) -- (347.2723,224.7135) --
      (343.8973,230.3385) -- cycle;
    \path[fill,line width=0.800pt] (299.4005,218.1823) node[above right] (text74)
      {$1$};
    \path[draw] (361.9680,235.4945) -- (250.6785,350.5374);
    \path[fill] (244.4209,357.0061) -- (257.8501,351.2141) -- (251.4607,349.7289) --
      (249.7644,343.3922) -- cycle;
    \path[fill,line width=0.800pt] (299.4005,283.7297) node[above right] (text80)
      {$1$};
    \path[draw] (233.9916,352.7871) -- (233.9916,312.2503);
    \path[fill] (233.9916,303.2503) -- (228.3666,316.7503) -- (233.9916,313.3753) --
      (239.6166,316.7503) -- cycle;
    \path[fill,line width=0.800pt] (218.7885,332.5538) node[above right] (text86)
      {$m$};
    \path[draw] (233.9916,273.2503) -- (233.9916,248.7135);
    \path[fill] (233.9916,239.7135) -- (228.3666,253.2135) -- (233.9916,249.8385) --
      (239.6166,253.2135) -- cycle;
    \path[fill,line width=0.800pt] (218.7885,261.0171) node[above right] (text92)
      {$m$};
    \path[draw] (-14.1508,342.5950) -- (-3.1587,352.0944);
    \path[fill] (3.6508,357.9791) -- (-2.8855,344.8960) -- (-4.0099,351.3588) --
      (-10.2414,353.4079) -- cycle;
  \end{scope}

\end{tikzpicture}}
\caption{Automaton for the eight forbidden words}\label{f72}
\end{figure}

We claim that the block $(1mm1)(m11mm1)$ is also forbidden. 

Assume on the contrary that a sequence $(c_i)\in V'_{q,m}$ contains such a block.
Then we infer from the above list that
\begin{equation*}
(c_{n-1+i})\in (1mm1)(m11mm1)\set{1mm1,m11mm1}^{\infty}
\end{equation*}
for some $n\ge 1$.
Therefore $c_n=1$ and
\begin{equation*}
(c_{n+i})\ge (mm1)(m11mm1)(1mm1)^{\infty}=(mm1)(m11m)^{\infty}.
\end{equation*}
Applying Lemma \ref{l41} and the hypothesis $q\le r(m)$ we get
\begin{equation*}
\pi_q(c_{n+i})=\pi_q\left(mm1(m11m)^{\infty}\right)\ge \pi_{r_m}\left(mm1(m11m)^{\infty}\right)=m-1,
\end{equation*}
contradicting the first condition of Lemma \ref{l42}.
This contradiction proves our claim.

Since $(1mm1)(m11mm1)$ is forbidden, the above list corresponding to seven forbidden words may be further reduced: for $q\in (1,r(m)]$ each sequence $(c_i)\in V'_{q,m}$ belongs to the following list:
\begin{align*}
&m^{\infty},\\ 
&m^*(1m)^{\infty},\\ 
&m^*(1m)^*1(1mm1)^{\infty},\\
&m^*(1m)^*(1mm1)^{\infty}.
\end{align*}
(See also the corresponding automaton in Figure \ref{f72}.)
Hence $V'_{q,m}$ is countable. 

As in the previous proofs, the conclusion remains valid for all $1<q\le r(m)$.
\medskip 

\emph{Second step.}
We show that $V'_{q,m}$ has the power of continuum if $q>r(m)$. 
By Lemma \ref{l71} there exists a sufficiently large integer $k$ such that 
\begin{equation*}
\pi_q\left(mm(1m(11mm)^k)^{\infty}\right)< m-1.
\end{equation*}
We complete the proof by showing that 
\begin{equation*}
\set{1m(11mm)^k,1m(11mm)^{k+1}}^{\infty}\subset V'_{q,m}.
\end{equation*}

It suffices to check that if $(c_i)$ is a sequence in the left hand side set and $c_n=1$, then the conditions of Lemma \ref{l42} are satisfied.
Since $c_n=1$, we have 
\begin{equation*}
(1mm1)^{\infty}<(c_{n+i})\le mm(1m(11mm)^k)^{\infty}.
\end{equation*}
Applying Lemma \ref{l41} the conditions of Lemma \ref{l42} follow:
\begin{equation*}
\pi_q(c_{n+i})\le \pi_q\left(mm(1m(11mm)^k)^{\infty}\right)< m-1
\end{equation*}
and 
\begin{equation*}
\pi_q(c_{n+i})> \pi_q\left((1mm1)^{\infty}\right)>\frac{m}{q-1}-1.
\end{equation*}
\medskip 

We conclude from the preceding steps that $r_m=r(m)$. 
\end{proof}

\section{Proof of Proposition \ref{p15} (iii)}\label{s8}

As in the preceding two sections, we have $d=(10)^{\infty}$, $m_d\approx 2.80194$ and $M_d\approx 4.54646$.
We start by defining $m_4$:

\begin{lemma}\label{l81}\mbox{}

\begin{enumerate}[\upshape (i)]
\item The system of equations 
\begin{equation*}
\pi_q(m(m1)^{\infty})=m-1\qtq{and} q=m-1
\end{equation*}
has a unique solution $(m_4,q_4)$ with $q_4>1$.
We have explicitly 
\begin{equation*}
m_4=\frac{3+\sqrt{13}}{2}\approx 3.30278\qtq{and} q_4=\frac{1+\sqrt{13}}{2}\approx 2.30278.
\end{equation*}

\item For each $m\in [m_4,M_d]$ there exists a number $r(m)>1$ such that 
\begin{align*}
1<q<r(m)&\Longrightarrow \pi_q(m(m1)^{\infty})\ge m-1\qtq{and} q\le m-1,\\
q>r(m)&\Longrightarrow \pi_q(m(m1)^{\infty})<m-1.
\end{align*}

\item We have
\begin{equation}\label{81}
P_m\le r(m)<R_m,
\end{equation}
and the equality $r(m)=P_m$ holds only if $m=M_d$.
\end{enumerate}
\end{lemma}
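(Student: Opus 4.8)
The plan is to reuse the monotonicity scheme already used for Lemmas \ref{l51} and \ref{l61}. Summing the geometric series gives the closed form
\[
\pi_q(m(m1)^{\infty})=\frac{mq^2+mq-m+1}{q(q^2-1)},
\]
so that $q\mapsto\pi_q(m(m1)^{\infty})$ is continuous and strictly decreasing on $(1,\infty)$, with limit $+\infty$ as $q\to 1^+$ and limit $0$ as $q\to\infty$; equivalently $\pi_q(m(m1)^{\infty})=m-1$ amounts to the polynomial relation $(m-1)(q^3-q^2-2q+1)=q^2+q$. For part (i) I would substitute $q=m-1$ into this last relation; cancelling the common factor $q^2$ leaves $q^2-q-3=0$, whose unique root greater than $1$ is $q_4=\tfrac{1+\sqrt{13}}{2}$, and then $m_4=q_4+1=\tfrac{3+\sqrt{13}}{2}$. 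Uniqueness is clear because the polynomial obtained after eliminating $m$ changes sign exactly once on $(1,\infty)$, and the stated numerical values follow.

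For part (ii), the monotonicity and the two limits yield a unique $r(m)\in(1,\infty)$ with $\pi_{r(m)}(m(m1)^{\infty})=m-1$, hence $\pi_q(m(m1)^{\infty})\ge m-1$ for $1<q\le r(m)$ and $\pi_q(m(m1)^{\infty})<m-1$ for $q>r(m)$; this is already the first inequality of the first implication and all of the second implication. It remains only to check that $1<q<r(m)$ forces $q\le m-1$, for which it suffices to prove $r(m)\le m-1$ on $[m_4,M_d]$. Since $\pi_q(m(m1)^{\infty})$ is decreasing in $q$, this is equivalent to $\pi_{m-1}(m(m1)^{\infty})\le m-1$; a short substitution gives $\pi_{m-1}(m(m1)^{\infty})=\tfrac{(m-1)(m+1)}{m(m-2)}$, and after dividing by $m-1>0$ the inequality becomes $\tfrac{m+1}{m(m-2)}\le 1$, i.e. $m^2-3m-1\ge 0$, i.e. exactly $m\ge m_4$. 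So (ii) follows.

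For part (iii), monotonicity again reduces $r(m)<R_m$ to $\pi_{R_m}(m(m1)^{\infty})<m-1$ and $r(m)\ge P_m$ to $\pi_{P_m}(m(m1)^{\infty})\ge m-1$. Substituting $q=R_m=\tfrac{2m-1}{m-1}$ and clearing the positive denominator, the first reduces to $(m-1)^3>0$, true for every $m>1$, so $r(m)<R_m$ holds unconditionally. Substituting $q=P_m=1+\sqrt{m/(m-1)}$ and setting $t:=\sqrt{m/(m-1)}>1$, the second reduces, after clearing positive denominators, to
\[
\psi(t):=t^4+2t^3-t^2-2t-1=(t^2+t-1)^2-2\ \ge\ 0.
\]
Now $\psi'(t)=4t^3+6t^2-2t-2>0$ for $t\ge 1$, so $\psi$ is strictly increasing on $[1,\infty)$; since $m\mapsto t=\sqrt{m/(m-1)}$ is strictly decreasing, $\psi(t)$ attains its minimum over $[m_4,M_d]$ at $m=M_d$. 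It therefore suffices to observe that $\psi\bigl(\sqrt{M_d/(M_d-1)}\bigr)=0$, i.e. $t^2+t-1=\sqrt{2}$ at that value of $t$, which matches the defining relation of the right endpoint $M_d$ recalled in the Appendix. Then $\psi(t)\ge 0$ throughout $[m_4,M_d]$ with equality only at $m=M_d$, which is precisely \eqref{81} together with its equality case.

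The only step that is not routine bookkeeping is this last one: matching the vanishing of $\psi$ at the endpoint with the characterization of $M_d$, which is where the combinatorial structure of the component $(m_d,M_d)$ of $(2,\infty)\setminus C$ attached to $d=(10)^{\infty}$ genuinely enters. Everything else is the standard decreasing-function argument together with elementary algebra, entirely parallel to the treatment of $r(m)$ in Sections \ref{s5} and \ref{s6}.
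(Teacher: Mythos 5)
Your proof is correct and follows essentially the same route as the paper: the closed form and strict monotonicity of $q\mapsto\pi_q(m(m1)^{\infty})$, evaluation at $q=m-1$ to obtain $q^2-q-3=0$ and the threshold $m_4$, and evaluation at $q=R_m$ and $q=P_m$ reducing to the same polynomial criteria (your $\psi(t)$ with $t=P_m-1$ is exactly the paper's $P_m^4-2P_m^3-P_m^2+2P_m-1$, whose vanishing is precisely the defining equation of $M_d$). The only substantive difference is that you justify the sign of $\psi$ on $[m_4,M_d]$ by an explicit monotonicity computation ($\psi'>0$ composed with the decreasingness of $m\mapsto P_m$), where the paper instead invokes its relation \eqref{911}; both arguments are sound.
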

\noindent See Figure \ref{f15}.

\begin{proof}\mbox{}

\begin{enumerate}[\upshape (i)]
\item See \eqref{916} below.

\item The function $q\mapsto \pi_q(m(m1)^{\infty})$ is continuous and decreasing in $q\in (1,\infty)$. 
Since 
\begin{equation*}
\lim_{q\to 1}\pi_q(m(m1)^{\infty})=\infty
\qtq{and}
\lim_{q\to \infty}\pi_q(m(m1)^{\infty})=0,
\end{equation*}
there exists a  number $r(m)>1$ satisfying the relation
\begin{equation*}
\pi_q(m(m1)^{\infty})-(m-1)\sim r(m)-q.
\end{equation*}

It remains to show that $r(m)\le m-1$.
For $m=m_4$ we have equality by (i).
For $m>m_4$ we have (see \eqref{916})
\begin{equation*}
\pi_{m-1}(m(m1)^{\infty})<m-1\qtq{and} \pi_{r(m)}(m(m1)^{\infty})=m-1,
\end{equation*}
so that $r(m)<m-1$ by the decreasingness of the function $q\mapsto \pi_q(m(m1)^{\infty})$.

\item This follows from the definition of $r(m)$, \eqref{917} and \eqref{918}.
\end{enumerate}
\end{proof}

\begin{proof}[Proof of Proposition \ref{p15} (iii)]\mbox{}

\emph{First step.} If $1<q\le r(m)$, then $V'_{m,q}$ is countable. 

Indeed, it follows from our assumptions that
\begin{equation*}
\pi_q(m(m1)^{\infty})\ge m-1 
\qtq{and}
q\le m-1.
\end{equation*}
We claim that the blocks $1mm$ and $11$ are forbidden.
As usual, we may assume that $2<q\le r(m)$.

If $(c_i)\in\set{1,m}^{\infty}$ contains a block $c_nc_{n+1}=11$, then 
\begin{equation*}
\pi_q(m-(c_{n+i}))\ge \frac{m-1}{q}\ge 1
\end{equation*}
and hence $(c_i)\notin V'_q$ by Lemma \ref{l42}.

If $(c_i)\in\set{1,m}^{\infty}$ does not contain any block $11$, but contains a block
\begin{equation*}
c_nc_{n+1}c_{n+2}=1mm,
\end{equation*}
then
\begin{equation*}
\pi_q(c_{n+i})\ge \pi_q\left( m(m1)^{\infty}\right)\ge m-1
\end{equation*}
by our assumptions, and hence $(c_i)\notin V'_q$ again by Lemma \ref{l42}.

It follows that $V'_{m,q}\subset\set{m^{\infty}, m^*(1m)^{\infty}}$, and hence it is countable. 
\medskip 

\emph{Second step.} If $q>r(m)$, then $V'_{m,q}$ has the power of continuum. 

Indeed, since $\pi_q(m(m1)^{\infty})< m-1$ by our assumption, we may fix a large integer $k$ satisfying 
\begin{equation*}
\pi_q\left( \left( m(m1)^k\right)^{\infty}\right)<m-1.
\end{equation*}
We claim that 
\begin{equation*}
\set{m(m1)^{\ell}\ :\ \ell\ge k}^{\infty}\subset V'_{m,q}.
\end{equation*}

For the proof we take an arbitrary sequence $(c_i)$ from the left hand side set. 
We have to check the conditions of Lemma \ref{l42}. 
If $c_n=1$, then 
\begin{equation*}
\pi_q(c_{n+i})\le \pi_q\left( \left( m(m1)^k\right)^{\infty}\right)<m-1
\end{equation*}
by the choice of $k$, and 
\begin{equation*}
\pi_q(m-(c_{n+i}))<\pi_q\left( m-(m1)^{\infty}\right)=\pi_q(m-\delta')<1
\end{equation*}
because $q>p_m$ and therefore $\delta'\in V'_{m,q}$ by property (vii) in Theorem \ref{t12}. 
\medskip 

It follows from the preceding steps that $r_m=r(m)$. 
\end{proof}

\section{Appendix}\label{s9}

We assume throughout this section that $m\ge 2$ and $q>1$.
As before, we will write $f\sim g$ if $f$ and $g$ have the same sign.

An elementary computation shows that the definitions
\begin{equation*}
P_m:=1+\sqrt{\frac{m}{m-1}}
\qtq{and}
R_m:=1+\frac{m}{m-1}
\end{equation*}
are equivalent to the relations 
\begin{equation}\label{91}
(m-1)P_m(P_m-2)=1
\qtq{and}
(m-1)(R_m-2)=1.
\end{equation}
They will be often used below without explicit reference.

The first two subsections are related to the proof of Proposition \ref{p14}, the following four subsections to that of Proposition \ref{p15}.
Accordingly, the notations $m_d$, $M_d$ refer to the corresponding  admissible sequences $0^{\infty}$ or $(10)^{\infty}$.

\subsection{The sequence $1^{\infty}$}\label{ss91}

We recall from \cite{KomLaiPed2010} that the first connected component of $[2,\infty)\setminus C$ is the interval $[2,M_d)$, associated with the smallest admissible sequence $d=0^{\infty}$, where $M_d=2.32472\ldots$ is the unique positive solution of the equation
\begin{equation*}
\pi_{P_m}(m^{\infty}-1^{\infty})=1,
\end{equation*}
and that $p_m=p_m''$ for each $m\in [2,m_d]$, where $p_m''$ is the unique positive solution of the equation
\begin{equation*}
\pi_{p_m''}(m^{\infty}-1^{\infty})=1.
\end{equation*}

It follows from the identity
\begin{equation*}
\pi_q(m^{\infty}-1^{\infty})-1=\frac{m-1}{q-1}-1=\frac{m-q}{q-1}
\end{equation*}
that
\begin{equation}\label{92}
p_m=m\qtq{for all} m\in [2,m_d]=[2,2.32472\ldots],
\end{equation}
and that
\begin{equation}\label{93}
\pi_{P_m}(m^{\infty}-1^{\infty})-1\sim m-P_m,
\end{equation}
whence $M_d=2.32472\ldots$ is the unique solution of the equation $P_m=m$.

Since 
\begin{align*}
m=P_m
&\Longleftrightarrow m-1=\sqrt{\frac{m}{m-1}}\\
&\Longleftrightarrow (m-1)^3=m\\
&\Longleftrightarrow (m-1)^3-(m-1)-1=0,
\end{align*}
we conclude that $M_d=1+\alpha$ where $\alpha=1.32472\ldots$ denotes the unique positive root of the polynomial $x^3-x-1$ (this is the first Pisot number).

\subsection{The sequence $m1^{\infty}$}\label{ss92}
We have
\begin{align*}
\pi_q(m1^{\infty})-(m-1)
&=\frac{m-1}{q}+\frac{1}{q-1}-(m-1)\\
&=\frac{q-(m-1)(q-1)^2}{q(q-1)}.
\end{align*}
Hence 
\begin{equation}\label{94}
\pi_q(m1^{\infty})=(m-1)\Longleftrightarrow q=r(m):=\frac{2m-1+\sqrt{4m-3}}{2m-2}.
\end{equation}

Furthermore, using \eqref{91} we obtain
\begin{align*}
\pi_{R_m}(m1^{\infty})-(m-1)
&\sim R_m-(m-1)(R_m-1)^2\\
&\sim R_m(R_m-2)-(R_m-1)^2\\
&=-1
\end{align*}
and 
\begin{align*}
\pi_{P_m}(m1^{\infty})-(m-1)
&\sim P_m-(m-1)(P_m-1)^2\\
&\sim P_m^2(P_m-2)-(P_m-1)^2\\
&=P_m^3-3P_m^2+2P_m-1\\
&=(P_m-1)^3-(P_m-1)-1.
\end{align*}
Recalling that $x^3-x-1=0$ for $x=\alpha$ and observing that $x^3-x-1<0$ for $x\in (0,\alpha)$, we conclude from the last two relations that 
\begin{equation}\label{95}
\pi_{R_m}(m1^{\infty})<(m-1)\qtq{for all} m\in [2,\infty)
\end{equation} 
and (since the function $m\mapsto P_m$ is decreasing)
\begin{equation}\label{96}
\pi_{P_m}(m1^{\infty})-(m-1)\sim 1+\alpha-m=M_d-m.
\end{equation} 

\subsection{The sequence $(m1)^{\infty}$}\label{ss93}

We recall from \cite{KomLaiPed2010} that the connected component $(m_d,M_d)=(2.80194\ldots,4.54646\ldots)$ of $[2,\infty)\setminus C$, associated with the  admissible sequence $d=\set{(10)^{\infty}}$, is defined by the equations 
\begin{equation*}
\pi_{P_{m_d}}((m_d1)^{\infty})=m-1\qtq{and} \pi_{P_{M_d}}(M_d^{\infty}-(M_d1)^{\infty})=1,
\end{equation*}
and that $p_m=\max\set{p_m',p_m''}$ for all $m\in [m_d,M_d]$, where $p_m'$ and $p_m''$ are uniquely defined by the equations 
\begin{equation*}
\pi_{p_m'}((m1)^{\infty})=m-1\qtq{and} \pi_{p_m''}(m^{\infty}-(m1)^{\infty})=1.
\end{equation*}

We have
\begin{align*}
\pi_q((m1)^{\infty})-(m-1)
&=\frac{1}{q-1}+(m-1)\pi_q((10)^{\infty})-(m-1)\\
&=\frac{1}{q-1}+\frac{(m-1)q}{q^2-1}-(m-1)\\
&=\frac{(q+1)-(m-1)(q^2-q-1)}{q^2-1}.
\end{align*}
Hence 
\begin{multline}\label{97}
\pi_q((m1)^{\infty})=(m-1)\Longleftrightarrow\\
q=r(m):=\frac{m-1+\sqrt{(m-1)^2+4}}{2}.
\end{multline}

Furthermore, using \eqref{91} we obtain
\begin{align*}
\pi_{P_m}((m1)^{\infty})-(m-1)
&\sim (P_m+1)-(m-1)(P_m^2-P_m-1)\\
&\sim (P_m+1)P_m(P_m-2)-(P_m^2-P_m-1).
\end{align*}
Hence
\begin{equation}\label{98}
\pi_{P_m}((m1)^{\infty})-(m-1)\sim P_m^3-2P_m^2-P_m+1
\end{equation}
after simplification, and, using the decreasingness of $m\mapsto P_m$,
\begin{equation}\label{99}
P_m^3-2P_m^2-P_m+1\sim m_d-m.
\end{equation} 

Finally, it follows from  the identity
\begin{align*}
\pi_q(m^{\infty}-(m1)^{\infty})-1
&=(m-1)\pi_q((01)^{\infty})-1\\
&=\frac{m-1}{q^2-1}-1
\end{align*}
that 
\begin{align*}
\pi_{P_m}(m^{\infty}-(m1)^{\infty})-1
&=\frac{m-1}{P_m^2-1}-1\\
&=\frac{1}{P_m(P_m-2)(P_m^2-1)}-1\\
&\sim 1-P_m(P_m-2)(P_m^2-1).
\end{align*}
Hence
\begin{equation}\label{910}
\pi_{P_m}(m^{\infty}-(m1)^{\infty})-1\sim -P_m^4+2P_m^3+P_m^2-2P_m+1
\end{equation}
after simplification, and, since the right hand side tends to the positive limit $1$ as $m\to \infty$ and $P_m\to 2$,
\begin{equation}\label{911}
-P_m^4+2P_m^3+P_m^2-2P_m+1\sim m-M_d.
\end{equation} 

\subsection{The sequence $(1m)^{\infty}$}\label{ss94}
We have
\begin{align*}
\pi_q(m^{\infty}-(1m)^{\infty})-1
&=(m-1)\pi_q((10)^{\infty})-1\\
&=\frac{(m-1)q}{q^2-1}-1\\
&=\frac{(m-1)q+(1-q^2)}{q^2-1}.
\end{align*}
Hence
\begin{multline}\label{912}
\pi_q(m^{\infty}-(1m)^{\infty})=1\Longleftrightarrow \\
q=r(m):=\frac{m-1+\sqrt{(m-1)^2+4}}{2}.
\end{multline} 

Furthermore, using \eqref{91},
\begin{align*}
\pi_{R_m}(m^{\infty}-(1m)^{\infty})-1
&\sim (m-1)R_m+(1-R_m^2)\\
&\sim R_m+(R_m-2)(1-R_m^2)\\
&=-R_m^3+2R_m^2+2R_m-2
\end{align*}
and 
\begin{align*}
\pi_{P_m}(m^{\infty}-(1m)^{\infty})-1
&\sim (m-1)P_m+(1-P_m^2)\\
&\sim 1+(P_m-2)(1-P_m^2)\\
&=-P_m^3+2P_m^2+P_m-1.
\end{align*}

If $m\in [2.80194\ldots,2.91286\ldots]$, then $R_m\in [2.5,3)$, and therefore 
\begin{align*}
-R_m^3+2R_m^2+2R_m-2
&=(R_m+1)\left( 1.25-(R_m-1.5)^2\right) -1\\
&\le \frac{R_m+1}{4}-1<0.
\end{align*}
We conclude that 
\begin{multline}\label{913}
\pi_{R_m}(m^{\infty}-(1m)^{\infty})<1\qtq{for all} \\
m\in [2.80194\ldots,2.91286\ldots].
\end{multline} 

Furthermore, using \eqref{99} we obtain that
\begin{equation}\label{914}
\pi_{P_m}(m^{\infty}-(1m)^{\infty})\ge 1\qtq{for all} m\ge 2.80194\ldots,
\end{equation} 
with equality only if $m=m_d=2.80194\ldots .$

\subsection{The sequence $mm1^{\infty}$}\label{ss95}
We have
\begin{align*}
\pi_q(mm1^{\infty})-(m-1)
&=\frac{m-1}{q}+\frac{m-1}{q^2}+\frac{1}{q-1}-(m-1)\\
&=\frac{q^2-(m-1)(q-1)(q^2-q-1)}{q^2(q-1)}\\
&=\frac{q^2-(m-1)(q^3-2q^2+1)}{q^2(q-1)},
\end{align*}
implying
\begin{equation}\label{915}
\pi_q(mm1^{\infty})-(m-1)\sim 1-(m-1)(q-2+q^{-2}).
\end{equation} 

\subsection{The sequence $m(m1)^{\infty}$}\label{ss96}
We have
\begin{align*}
\pi_q(m(m1)^{\infty})-(m-1)
&=\frac{m-1}{q}+\frac{1}{q-1}+\frac{m-1}{q^2-1}-(m-1)\\
&=\frac{q(q+1)-(m-1)(q^3-q^2-2q+1)}{q(q^2-1)}.
\end{align*}
We evaluate this expression for $q=m-1$, $R_m$ and $P_m$.

For $q=m-1$ we get 
\begin{align*}
\pi_{m-1}(m(m1)^{\infty})-(m-1)
&=\frac{q(q+1)-q(q^3-q^2-2q+1)}{q(q^2-1)}\\
&=\frac{-q(q^2-q-3)}{q^2-1}.
\end{align*}
Denoting by $q_4:=(1+\sqrt{13})/2$ the unique positive root of the numerator and setting $m_4:=1+q_4$, it follows that
\begin{equation}\label{916}
\pi_{m-1}(m(m1)^{\infty})-(m-1)\sim m_4-m.
\end{equation} 

Next we remark that
\begin{align*}
\pi_{R_m}(m(m1)^{\infty})&-(m-1)\\
&\sim R_m(R_m+1)-(m-1)(R_m^3-R_m^2-2R_m+1)\\
&\sim R_m(R_m+1)(R_m-2)-(R_m^3-R_m^2-2R_m+1)\\
&=-1,
\end{align*}
so that
\begin{equation}\label{917}
\pi_{R_m}(m(m1)^{\infty})<m-1.
\end{equation} 

Finally, 
\begin{align*}
\pi_{P_m}(m(m1)^{\infty})&-(m-1)\\
&\sim P_m(P_m+1)-(m-1)(P_m^3-P_m^2-2P_m+1)\\
&\sim P_m^2(P_m+1)(P_m-2)-(P_m^3-P_m^2-2P_m+1),
\end{align*}
yielding
\begin{equation*}
\pi_{P_m}(m(m1)^{\infty})-(m-1)\sim P_m^4-2P_m^3-P_m^2+2P_m-1.
\end{equation*} 
Using \eqref{911} we conclude that
\begin{equation}\label{918}
\pi_{P_m}(m(m1)^{\infty})-(m-1)\ge 0\qtq{if} m\in [2,M_d],
\end{equation} 
with equality only if $m=M_d$.

\end{document}